\documentclass[12pt,reqno]{amsart}

\usepackage{soul}
\usepackage{multicol,amsmath,graphics, mathtools, cancel,soul,color,bbm,amsfonts,dsfont,tikz,mathrsfs,amssymb,bm,bbold}
\usepackage[left=1in, right=1in, top=1.1in,bottom=1.1in]{geometry}

\usepackage[colorlinks=true,linkcolor=blue,citecolor=blue]{hyperref}

\usepackage{libertine} \usepackage{libertinust1math}

\usepackage{enumerate}

\usepackage{comment}

\usetikzlibrary{matrix,patterns,positioning}
\numberwithin{equation}{section}

\newcommand{\Poiss}{\eta}
\newcommand{\freeWass}[1]{\mathcal{W}_{nc,#1}}
\newcommand{\classWass}[1]{W_{#1}}

\newcommand{\C}{\mathbb{C}}

\newcommand{\E}{\mathbb{E}}

\newcommand{\N}{\mathbb{N}}

\newcommand{\R}{\mathbb{R}}

\newcommand{\Z}{\mathbb{Z}}

\newcommand{\Ac}{\mathcal{A}}

\newcommand{\Fc}{\mathcal{F}}

\newcommand{\Hc}{\mathcal{H}}

\newcommand{\Mca}{\mathcal{M}}

\newcommand{\Pc}{\mathcal{P}}

\newcommand{\fcomm}[1]{\noindent\textcolor{purple}{\textbf{#1}}}

\newcommand{\radio}{\mathrm{r}}

\newcommand{\abs}[1]{\left\vert#1\right\vert}

\newcommand{\norm}[1]{\left\lVert#1\right\rVert}

\newcommand{\vertiii}[1]{{\left\vert\kern-0.25ex\left\vert\kern-0.25ex\left\vert #1
		\right\vert\kern-0.25ex\right\vert\kern-0.25ex\right\vert}}

\newcommand{\Indi}[1]{\mathbbm{1}_{{#1}}}

\newcommand{\gaussian[1]}{g^{\circledast}_{{\,_{#1}}}}

\newcommand{\sgaussian[1]}{g_{_{{#1}^2}}}

\newcommand{\OrdAsProduct}[3]{  {\textstyle  \underset{{#2} \in {#3}} {\overset{\longrightarrow}{\prod}}} \, {#1}_{#2}   }

\def\R{\mathbb{R}}

\def\dh2l{\mathbf{d}_{\mathbb{H}_{2\ell}}}
\def\d2{\mathbf{d}_2}

\def\1{\mathbbm{1}}

\theoremstyle{remark}

\theoremstyle{definition}

\newcounter{dummy} \numberwithin{dummy}{section}
\newtheorem{Definition}[dummy]{Definition}

\newtheorem{Theorem}[dummy]{Theorem}

\newtheorem{Lemma}[dummy]{Lemma}
\newtheorem{Example}[dummy]{Example}
\newtheorem{Remark}[dummy]{Remark}
\newtheorem{Notation}[dummy]{Notation}

\newtheorem*{Lemma*}{Lemma}
\newtheorem*{Theorem*}{Theorem}

\title{Distributional comparison for non-commutative infinitely divisible probability measures}
\date{\today}

\makeatletter
\@namedef{subjclassname@1991}{2020 Mathematics Subject Classification}
\makeatother

\author{Arturo Jaramillo}
\author{Josue Vazquez-Becerra}

\begin{document}

	\maketitle
We determine ``cumulant-type'' upper bounds of the non-commutative Wasserstein distance for certain classes of distributions $\mu$ and $\nu$, which are infinite divisible with respect to the Boolean, classical and free convolutions. 
%
%between a general non-commutative Boolean, classical and free compound Poisson distribution $\mu$ and a benchmark compound distribution counterpart $\nu$ with Levy measure concentrated in $m$ points. 
%
The main contribution of the manuscript is an estimation of the non-commutative Wasserstein distance between $\mu$ and $\nu$, expressed in terms of %an explicit constant multiple of 
the difference between cumulants of order less than $2m+4$. %, between $\mu$ and $\nu$.
%
%Since convergence in cumulants and convergence in moments are equivalent, the last theorem enacts a drastic simplification of the so-called method of moments. 
	
%\ \\ What and why! 

%\newpage 
	
\section{Introduction}

%\subsection{Background}

Infinitely divisible distributions are central to both non-commutative and classical
probability. 
Not only they arise as limiting distributions for sums of independent random variables, %in triangular arrays, 
but also they are the building blocks for  L\'{e}vy processes  \cite{MR1605393,MR3155252,MR3185174}, i.e., stochastic processes with independent and stationary increments. 
A main question then relates to the comparison or proximity between these distributions through a metric. 
This manuscript contributes to this subject by providing ``cumulant-type'' and ``moment-type'' upper bounds for the Wasserstein distance within two classes of distributions that are infinitely divisible with respect to the Boolean, classical, and free convolution, see Theorem \ref{thm:main_theorm}. 

Let $\mathcal{P}(\R)$ denote the set of probability distributions over $\R$. 
The convolution of measures relates intrinsically to the notion of independence:  % from probability theory: 
given two probability measures $\mu,\nu \in \mathcal{P}(\R)$, their classical convolution $\mu \ast \nu$ can be defined as the probability distribution of $x+y$ with $x$ and $y$ classically independent random variables with distributions $\mu$ and $\nu$, respectively. 
It is a fundamental result from probability theory that the convolution $\mu \ast \nu$ does not depend on any particular choice of $x$ or $y$ to prescribe it. 

Now, an approach from non-commutative probability, sometimes referred to as quantum probability, considers random variables as elements of a $C^{*}$-algebra $\mathcal{A}$  and lets a positive linear functional $\varphi:\Ac \to \C$ take the role of the expected value $\E[ \, \cdot  \, ]$. 
The pair $(\Ac,\varphi)$ is referred to as a $C^{*}$-probability space and the elements $a \in \Ac$ are called (non-commutative) random variables, see \cite[Lecture 3]{MR2266879} for general background on this approach. % and fundamental properties. 
If $\mathcal{A}$ has a multiplicative identity $1_{\Ac}$, the unital condition $\varphi[1_{\Ac}]=1$ is also required. 

It was proved in \cite{MR1919720,MR2016316,MR1426844} that there exist only five notions of independence arising from natural products in non-commutative probability spaces, namely, tensor, Boolean, free, monotone, and anti-monotone independence. %, under the above framework,
Tensor independence corresponds to classical independence from classical probability theory. 
Moreover, anti-monotone independence is a mirror image of the monotone case, so most results on anti-monotone variables are implicitly derived from their monotone counterparts. 

Each of these five notions of independence yields a different convolution on the set $\mathcal{P}(\R)$ of probability measures over $\R$, however, they all can be defined in a equivalent way as in the classical case for compactly supported distributions. 
Concretely, if $x$ and $y$ are random variables with distributions of compact support $\mu$ and $\nu$, respectively, then the convolution $\mu \circledast \nu$ is the analytical distribution of $x+y$ provided that $x$ is $\circledast$-independent from $y$. 
The classical, free, Booelan, and monotone convolutions are denoted by $\ast$, $\boxplus$, $\uplus$, and $\rhd $, respectively. 
It should be mentioned that monotone convolution $\rhd$ is the only one that is not commutative, in the sense that \emph{``$x$ monotone independent from $y$''} is not equivalent to \emph{``$y$ monotone independent from $x$''}, and consequently $\mu \rhd \nu$ and $\nu \rhd \mu$ are generally not equal. 

In the framework of $C^{*}$-probability spaces $(\Ac,\varphi)$, a self-adjoint random variable $a = a^\ast \in \Ac $ has  analytical distribution $\mu \in \Pc(\R)$ if the $\varphi$-moments of $a$ match the classical moments of $\mu$, i.e., for every integer $n \geq 0$ we have that 
\begin{align}\label{eqn:phi-moments-mu}
%m_n[a] :=	
\varphi[a^n] \ =  \int_{\R} t^n \, d \mu (t) 
%=: m_n[\mu]
. 
\end{align}
There always exist analytical distributions for self-adjoint random variables in $C^{*}$-probability spaces, see \cite[Proposition 3.13]{MR2266879}.
The monotone, Boolean, and free convolutions are well-defined operations on the set of probability distributions $\mathcal{P}(\R)$; these results were first proved for compactly supported measures in \cite{muraki2000monotonic,MR1426845,MR0839105}, and then extended to measures with unbounded support in \cite{MR1254116, MR2541362, MR1165862}. 
Once again, each convolution $\mu \circledast \nu$ does not depend on any particular choice of $x$ or $y$  to prescribe it. 
\begin{Definition} Let $\circledast \in \{ \ast, \boxplus ,\uplus, \rhd \}$. 
	A probability measure $\mu \in \mathcal{P}(\R)$ is said to be \emph{$\circledast$-infinitely divisible} if for each integer $n \geq 1$ there exists a probability distribution $\mu_n \in \mathcal{P}(\R)$ such that 
	\begin{align*}
	\mu = 	\underbrace{\mu_n \circledast \mu_n \circledast \cdots \circledast \mu_n}_{n \text{ times}} .
	\end{align*} 
\end{Definition} 

Let $\mathcal{ID}(\circledast)$ denote the set of $\circledast$-infinitely divisible probability measures over $\R$. 
The following theorem due to L\'{e}vy and Khintchine \cite{khintchine1937theorie,levy1938theorie}, in the classical case,  Bercovici and Pata  \cite{MR1709310}, in the the free and Boolean cases, and Anshelevich and Williams \cite{MR3214313}, in the monotone case, states that infinite divisibility emerges naturally within limit theorems for sums of independent random variables. % in triangular arrays. 
\begin{Theorem}\label{thm:ID_as_limit_arrays}
Let $\circledast \in \{ \ast, \boxplus ,\uplus, \rhd \}$. 
	A probability measure $\mu$ is $\circledast$-infinitely divisible if and only if there exist a sequence of probability measures $\{ \mu_n \}_{n \in \N}$ and a sequence of positive integers $ r_1 < r_2 < r_3 < \cdots $ such that 
$$\underbrace{\mu_n \circledast \mu_n \circledast \cdots \circledast \mu_n}_{r_n \text{ times}} \text{\quad converges to\quad} \mu \text{\quad in distribution.}$$ %$\mu$ is the limit in distribution of the sequence
	%
	%\begin{align*} 	\underbrace{\mu_n \circledast \mu_n \circledast \cdots \circledast \mu_n}_{r_n \text{ times}}  \quad \text{converges to} \quad \mu \quad \text{in istribution.}	\end{align*} 
\end{Theorem}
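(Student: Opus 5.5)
The forward implication is immediate from the definition. If $\mu\in\mathcal{ID}(\circledast)$, take $r_n=n$ and let $\mu_n$ be an $n$-th convolution root of $\mu$. Then $\mu_n^{\circledast n}=\mu$ for every $n$, so the sequence is constant and trivially converges to $\mu$. All the work is in the converse, and I would handle it through the analytic transforms that linearize each convolution: the classical cumulant transform $\log\hat\mu$ for $\ast$, the Voiculescu transform $\phi_\mu$ for $\boxplus$, the self-energy $E_\mu(z)=z-F_\mu(z)$ with $F_\mu=1/G_\mu$ for $\uplus$, and composition of reciprocal Cauchy transforms $F_{\mu\rhd\nu}=F_\mu\circ F_\nu$ for $\rhd$.

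The converse goes in three steps.

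\textbf{Step 1 (reduction to an accompanying law).} Suppose $\mu_n^{\circledast r_n}\to\mu$ with $r_n\to\infty$. First I would show the array is infinitesimal, i.e.\ $\mu_n\to\delta_0$ after recentering. For $\ast$ and $\boxplus$ this follows from tightness of the sums. For $\boxplus$ I would use the fact that $\phi_{\mu_n^{\boxplus r_n}}=r_n\phi_{\mu_n}$ stays bounded on a truncated cone.

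\textbf{Step 2 (convergence of L\'evy data).} Next I would prove the transfer principle. Convergence of $\mu_n^{\circledast r_n}$ is equivalent to weak convergence of the finite measures
\[
\sigma_n(dt)=r_n\frac{t^2}{1+t^2}\,\mu_n(dt),
\]
together with convergence of the drifts
\[
\gamma_n=r_n\int\frac{t}{1+t^2}\,d\mu_n(t),
\]
to some pair $(\gamma,\sigma)$. This is the Bercovici--Pata comparison. For each of $\ast,\boxplus,\uplus$ one shows that $r_n(F_{\mu_n}(z)-z)$, or respectively $r_n\phi_{\mu_n}$ or $r_n(\hat\mu_n-1)$, is asymptotically equal to
\[
-\gamma_n-\int\frac{1+tz}{z-t}\,d\sigma_n(t)
\]
uniformly on compacts of a suitable domain. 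The argument uses infinitesimality to control the error terms.

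\textbf{Step 3 (identifying the limit).} Then $\mu$ has transform of L\'evy--Khintchine form, namely
\[
\phi_\mu(z)=\gamma+\int\frac{1+tz}{z-t}\,d\sigma(t)
\]
in the free case, and analogously in the other cases. Each such transform is infinitely divisible: dividing $(\gamma,\sigma)$ by $n$ gives an admissible transform, which defines the required roots $\mu^{(n)}$. This uses the characterization of those functions as transforms of probability measures, via the Nevanlinna representation.

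\textbf{Monotone case.} For $\rhd$ I would follow Anshelevich--Williams. Write $F_\mu$ as the time-one value of a composition semigroup generated by a vector field $A(z)=-\gamma-\int\frac{1+tz}{z-t}d\sigma(t)$. Then show that $F_{\mu_n}^{\circ r_n}\to F_\mu$ forces $r_n(F_{\mu_n}(z)-z)\to A(z)$, via a discrete-to-continuous flow (Euler scheme) approximation.

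The main obstacle is Step 2: obtaining the uniform asymptotic equivalence and the tightness of $\{\sigma_n\}$ from convergence of the convolution powers alone. Here I would first prove tightness of $\sigma_n$ by bounding the imaginary parts of the transforms along $z=iy$ with $y$ large. The monotone case is harder still, since iterated composition is nonlinear.
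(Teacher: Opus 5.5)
\textbf{Gap: the monotone case is circular.} In the converse direction you know only that $F_{\mu_n}^{\circ r_n}\to F_\mu$. Yet your monotone paragraph begins by writing $F_\mu$ as the time-one map of the composition semigroup generated by $A(z)=-\gamma-\int\frac{1+tz}{z-t}\,d\sigma(t)$. Showing that $F_\mu$ is such a time-one map is exactly how one proves $\mu\in\mathcal{ID}(\rhd)$ (the roots are then $\mu_{1/n}$), so the paragraph assumes what it has to prove.

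The Euler-scheme (Chernoff product formula) argument also runs the other way from how you use it. It shows that if $r_n(F_{\mu_n}(z)-z)\to A(z)$ locally uniformly, then $F_{\mu_n}^{\circ r_n}$ converges to the time-one map of the flow of $A$. It does not extract a generator from convergence of the iterates.

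The argument has to be ordered as follows.
\begin{enumerate}
\item[(i)] From $F_{\mu_n}^{\circ r_n}\to F_\mu$ alone, prove infinitesimality and the a priori bounds: tightness of $\sigma_n=r_n\frac{t^2}{1+t^2}\,\mu_n(dt)$ and boundedness of $\gamma_n$. This is the genuinely nonlinear estimate, since nothing linearizes the $r_n$-fold composition, and it is the step your monotone paragraph skips.
\item[(ii)] Pass to a subsequence with $(\gamma_n,\sigma_n)\to(\gamma,\sigma)$. Then $r_n(F_{\mu_n}(z)-z)\to A(z)$ locally uniformly, where $A$ is built from $(\gamma,\sigma)$. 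Indeed, $r_n(z-F_{\mu_n}(z))=\frac{F_{\mu_n}(z)}{z}\bigl(\gamma_n+\int\frac{1+tz}{z-t}\,d\sigma_n(t)\bigr)$ holds exactly, and $F_{\mu_n}(z)/z\to1$.
\item[(iii)] Apply the Chernoff product formula along that subsequence to get $F_{\mu_n}^{\circ r_n}\to F_1$, the time-one map of the flow of $A$.
\item[(iv)] Conclude $F_\mu=F_1$. Then $\mu$ embeds in a $\rhd$-convolution semigroup $(\mu_t)_{t\ge0}$, and $\mu_{1/n}$ is an $n$-th root.
\end{enumerate}
Subsequential convergence of the generators is enough here, so you never need $A$ to be determined by $F_\mu$.

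\textbf{The other three cases.} The paper gives no proof of this theorem; it only cites L\'evy--Khintchine, Bercovici--Pata and Anshelevich--Williams. For $\ast,\boxplus,\uplus$ your outline follows the route of those works: automatic infinitesimality, transfer to the L\'evy data $(\gamma_n,\sigma_n)$, and identification of the limit through the L\'evy--Khintchine/Nevanlinna form. Deferring the hard estimates puts you on the same footing as the paper.

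Three small corrections:
\begin{enumerate}
\item[(a)] In Step 2, the list matched by ``respectively'' is scrambled. For $\ast$, $r_n(\hat\mu_n(s)-1)$ equals $is\gamma_n+\int\bigl(e^{ist}-1-\frac{ist}{1+t^2}\bigr)\frac{1+t^2}{t^2}\,d\sigma_n(t)$ exactly; it is not a Cauchy-type integral.
\item[(b)] For $\boxplus$, $r_n\phi_{\mu_n}$ is asymptotic to $+\gamma_n+\int\frac{1+tz}{z-t}\,d\sigma_n(t)$. This is the negative of the expression for $r_n(F_{\mu_n}(z)-z)$.
\item[(c)] The Boolean case needs no limit theory. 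The function $F_{\mu^{\uplus t}}(z)=(1-t)z+tF_\mu(z)$ satisfies $\operatorname{Im}F_{\mu^{\uplus t}}(z)\ge\operatorname{Im}z$ for every $t>0$. Hence every probability measure is $\uplus$-infinitely divisible, and both directions are trivial.
\end{enumerate}
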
 

Certainly, two of the most fundamental limit theorems %in probability 
are the Central Limit Theorem and the Poisson Limit Theorem. 
The limiting distributions obtained from these two limit theorems, the $\circledast$-Gaussian and the compound $\circledast$-Poisson, concern this manuscript. 
Rather than presenting %them in classical terms, %
these two infinitely divisible measures in classical terms, %as limiting measures for sums of independent random variables, 
we will formulate them %$\circledast$-Gaussian and compound $\circledast$-Poisson distributions 
in terms of $\circledast$-cumulants, which are quantities well-suited to study convolutions in non-commutative probability. 

Let $\mathcal{P}_n(\R)$ denote the set of probability distributions over $\R$ with finite moments of order $1,\ldots,n$. 
And let $\mathcal{P}_{\infty}(\R) = \cap_{n \geq 1} \mathcal{P}_n(\R) $ be the set of probability distributions over $\R$ with finite moments of all orders. 
Recall that the $n$-th moment of a probability distribution $\mu \in \Pc(\R)$ is 
\begin{align*}
m_n[\mu] := \int_{\R} \ t^k \, \mu (dt)
\end{align*}
if such integral exists.
On the other hand, the $\circledast$-cumulants $\kappa^{\circledast}_n[\mu]$ of a probability distribution $\mu \in \mathcal{P}_n(\R)$ are determined recursively by the relation 
\begin{align*}
	m_n[\mu] \ \ = \sum_{\pi \in P_{\circledast}(n)} \kappa^{\circledast}_{\pi} [\mu]  \quad \text{with} \quad 
	\kappa^{\circledast}_{\pi} [\mu] : = \prod_{V \in \pi} \kappa^{\circledast}_{\abs{V}} [\mu]
\end{align*}
where $P_{\circledast}(n)$ is certain subset of (labeled) partitions of $[n]:=\{1,2,\ldots,n\}$, see Section \ref{subsection:cumulants}. 
Each $n$-th $\circledast$-cumulant $\kappa^{\circledast}_n[\mu]$ can be expressed as linear combination of products of the first $n$ moments $m_1[\mu],\ldots,m_n[\mu]$, and vice versa. 
%
%For instance, for , we have $$ \kappa^{\circledast}_1[\mu] = m_1[\mu]  \qquad \text{and}	\qquad \kappa^{\circledast}_2[\mu] = m_2[\mu] -  m_1[\mu] m_1[\mu]. $$
%
Consequently, the sequence of $\circledast$-cumulants of all orders $\{ \, \kappa^{\circledast}_n[\mu] \, \}_{n\geq 1}$ contains exactly the same information as the sequence of moments $\{ \, m_n[\mu] \, \}_{n\geq 1}$ for probability measures $\mu \in \mathcal{P}_{\infty}(\R)$. 

The $\circledast$-cumulants are more suitable than moments to investigate convolution of measures  in non-commutative probability due to fact that 
they linearize convolution, namely, they satisfy  
\begin{align}\label{eqn:linearization_cumulants}
	\kappa^{\circledast}_n[\mu_1 \circledast \cdots \circledast \mu_r ]
=
	\kappa^{\circledast}_n[\mu_1 ]
+ \cdots +
	\kappa^{\circledast}_n[\mu_r ] 
\end{align}
for $\mu_1, \ldots, \mu_r \in \Pc(\R)$. 
No additional conditions on the measures $\mu_k$ are required for classical, free, or Booelan convolution, but the relation \eqref{eqn:linearization_cumulants} holds for monotone convolution  only if $\mu_1, \ldots, \mu_r$ are identically distributed. 
The $\circledast$-cumulants also satisfy the homogeneity property
\begin{align}\label{eqn:homogeneity_cumulants}
	\kappa^{\circledast}_n[\mathrm{Dil}_{\lambda}(\mu)] %[\mu_{\lambda}  ]
	=
	\lambda^n \, \kappa^{\circledast}_n[\mu]
\end{align}
where $\mathrm{Dil}_{\lambda}(\mu)$ is the distribution of $\lambda x$ with $x$ any random variable with distribution $\mu$. 
Monotone, free and Boolean cumulants were first introduced and investigated in  \cite{MR2884229, MR1268597,MR1426845}, respectively. 
\begin{Definition}\label{def:gaussian_variable}
Let $\circledast \in \{ \ast, \boxplus ,\uplus, \rhd \}$. 
A probability measure $\mu \in \mathcal{P}_{\infty}(\R)$ is said to be \emph{a centered $\circledast$-Gaussian with variance $\sigma^2 \geq 0$} if it holds that %$\kappa^{\circledast}_n[\mu]= \sigma^2$, if $n=2$, and  $\kappa^{\circledast}_n[\mu]= 0$, otherwise.
\begin{align}\label{eqn:gaussian_in_cumulants}
	\kappa^{\circledast}_{\ell}[\mu] = \left\{ \begin{array}{cl}
		\sigma^2, &  \text{if } {\ell}=2,\\
		0, & \text{otherwise.}
	\end{array}\right.
\end{align}
If $\mu$ is a centered $\circledast$-Gaussian with variance $\sigma^2 \geq  0$, we denoted it by $\gaussian[\sigma^2]$. 
\end{Definition} 
The fact that \eqref{eqn:gaussian_in_cumulants} gives genuine Gaussian distributions, in the sense that it characterizes the measures arising from central limit theorems, can be heuristically justified as follows. 
Suppose $\mu_n$ and  $\mu$ are as in Theorem \ref{thm:ID_as_limit_arrays}. 
One might expect that, under possibly some additional conditions, convergence in distribution yields convergence in cumulants, so we would have 
\begin{align}\label{eqn:convergence_in_cumulants}
	\kappa^{\circledast}_{\ell}[\mu] = \lim_{n \to \infty} \, \kappa^{\circledast}_{\ell} [  \underbrace{\mu_n \circledast \cdots \circledast \mu_n}_{r_n \text{ times}}  ] 
	= \lim_{n \to \infty}  \, r_n \cdot \kappa^{\circledast}_{\ell} [  \mu_n ] 
\end{align}
where the last equality follows from the linearization property \eqref{eqn:linearization_cumulants}. 
To obtain a central limit theorem, one takes $r_n=n$ and $\mu_n$ as the distribution of $\frac{1}{\sqrt{n}} x$ where $x$ is a random variable with distribution $\nu \in \Pc_{\infty}(\R)$ having mean zero and variance $\sigma^2 \geq 0$; %, i.e., %which translates to cumulants as 
these two conditions become $\kappa^{\circledast}_1[\nu] = 0$ and $\kappa_2^{\circledast}[\nu] = \sigma^{2}$ in terms of $\circledast$-cumulants. 
But then, the homogeneity property \eqref{eqn:homogeneity_cumulants} gives 
\begin{align}\label{eqn:homogeneity_clt}
r_n \cdot \kappa^{\circledast}_{\ell} [  \mu_n ]  
%	= 	n \, \kappa^{\circledast}_m [  \mu_n   ] 
	= 
	n^{1-{\ell}/2} \, \kappa^{\circledast}_{\ell} [  \nu   ]. 
\end{align}
Putting together \eqref{eqn:convergence_in_cumulants}  and \eqref{eqn:homogeneity_clt}, we then obtain \eqref{eqn:gaussian_in_cumulants}. 
%
%\jcomment{the monotone Gaussian distribution (the arcsine law), the free Gaussian distribution (the semicircle law), the Boolean Gaussian distribution (the symmetric Bernoulli law)}
%
%
%
%
%

\begin{Definition}\label{def:compound_Poisson}
Let $\circledast \in \{ \ast, \boxplus ,\uplus, \rhd \}$. 
A probability measure $\mu \in \mathcal{P}_{\infty}(\R)$ is said to be \emph{a compound $\circledast$-Poisson distribution with rate $ \lambda \geq 0 $ and jump distribution $\Poiss \in \mathcal{P}_{\infty}(\R)$} if for every integer $ {\ell} \geq 1$ we have that
%it holds that $\kappa^{\circledast}_n[\mu] = \lambda \, m_n[\Poiss]$ 
%
\begin{align}\label{eqn:cpoisson_in_cumulants}
\kappa^{\circledast}_{\ell}[\mu] = \lambda \, m_{\ell}\;\![\Poiss] .
\end{align}
\end{Definition} 
As earlier with Gaussian distributions, one can heuristically justify that \eqref{eqn:cpoisson_in_cumulants} does give genuine Poisson distributions, in the sense it characterizes the measures arising from laws of rare events. 
In this case, one considers Equation \eqref{eqn:convergence_in_cumulants} with $ \mu_n  = ( {\textstyle ( 	1-\frac{\lambda}{n} ) \,	\delta_{0} 	+\frac{\lambda}{n} \, \Poiss } )$, see Section \ref{subsection:infinitely_divisible} for more details.  
%
%we leave the rest of the details for Section \ref{section:preliminaries}. 
%
%
%
%
%
 
%\subsection{Main result}
The new results in this manuscript concern to the instance when convergence in distribution is guaranteed by convergence of the first  moments (or, equivalently, cumulants) within the set of $\circledast$-infinitely divisible distributions. % $\mathcal{ID}(\circledast)$. 
As example and precedent of this, we have the following theorem due to Arizmendi, stating that convergence of the fourth cumulant implies convergence in distribution. % within the space $\mathcal{ID}(\circledast)$.
\begin{Theorem}[{\cite[Theorem 3.1]{MR3158548}}]\label{thm:arizmendi_fourth_cumulant}
Suppose $\{\mu_{n}\}_{n \in  \N} \subset \mathcal{ID}(\circledast)$ with $\circledast \in \{ \ast, \boxplus ,\uplus, \rhd \}$ is a sequence of infinitely divisible distributions with mean zero and variance one. 
Then, the condition $\kappa_4[\mu_{n}]\rightarrow\kappa_4[g^{\circledast}_{\, 1}]$ implies the convergence in distribution of $\mu_n$ towards $g^{\circledast}_{\, 1}$.
\end{Theorem}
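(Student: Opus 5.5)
We plan to use the L\'evy--Khintchine representation of each $\mu_n \in \mathcal{ID}(\circledast)$ in terms of cumulants. For a $\circledast$-infinitely divisible law with finite moments, the cumulants of order $\ell \geq 2$ are moments of a finite positive measure: there is a finite measure $\rho_n$ on $\R$ (the Gaussian part put as an atom at $0$, plus $t^2$ times the L\'evy measure) such that
\begin{align*}
\kappa^{\circledast}_{\ell}[\mu_n] = \int_{\R} t^{\ell-2}\, \rho_n(dt), \qquad \ell \geq 2.
\end{align*}
This is the limit of the compound Poisson relation \eqref{eqn:cpoisson_in_cumulants}, $\kappa^{\circledast}_\ell = \lambda\, m_\ell[\Poiss]$, obtained by approximating $\mu_n$ via Theorem \ref{thm:ID_as_limit_arrays}, and it holds with the same form for each $\circledast \in \{\ast,\boxplus,\uplus,\rhd\}$. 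The normalizations give $\kappa_1^{\circledast}[\mu_n]=0$ and $\rho_n(\R)=\kappa_2^{\circledast}[\mu_n]=1$. The hypothesis says $\int t^2\,\rho_n(dt)=\kappa_4[\mu_n]\to 0$, since $\kappa_4[g^{\circledast}_{\,1}]=0$ by \eqref{eqn:gaussian_in_cumulants}.

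The key step is to conclude that $\rho_n \to \delta_0$ weakly. The probability measures $\rho_n$ have second moment tending to zero, so by Chebyshev $\rho_n(\{|t|>\varepsilon\}) \le \varepsilon^{-2}\kappa_4[\mu_n]\to 0$ for every $\varepsilon>0$. The limit measure $\delta_0$ is exactly the L\'evy--Khintchine data of $g^{\circledast}_{\,1}$, namely cumulants $1$ at order $2$ and $0$ afterwards.

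The final step is to pass from convergence of the L\'evy--Khintchine data to convergence in distribution. I would invoke the continuity of the L\'evy--Khintchine correspondence: if the generating pairs (drift, finite measure) converge weakly, then the infinitely divisible laws converge weakly. This is the Bercovici--Pata framework for $\ast,\boxplus,\uplus$, and Anshelevich--Williams for $\rhd$. The drifts here are not literally $0$, because the mean-zero normalization involves $\int t\,\rho_n$ terms. However, $|\int t\,\rho_n(dt)| \le (\int t^2 \rho_n)^{1/2}\to 0$, so the drift parameters converge as well.

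Alternatively, one can avoid the continuity theorem by a moment-type argument. The third cumulant tends to $0$ by Cauchy--Schwarz, and the second moment is bounded, so $\{\mu_n\}$ is tight. Any subsequential limit is again $\circledast$-infinitely divisible, since $\mathcal{ID}(\circledast)$ is closed. Its transform data must then be the limit of $\rho_n$, which is $\delta_0$, so the limit is $g^{\circledast}_{\,1}$.

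The main obstacle is the monotone case. There the cumulant/L\'evy-measure correspondence and the continuity theorem are less standard, and I would rely on the Anshelevich--Williams limit theorem cited in Theorem \ref{thm:ID_as_limit_arrays}. A second, milder obstacle is that the higher cumulants $\kappa_\ell$, $\ell\geq 5$, need not exist or converge. The argument is arranged so that only the weak convergence $\rho_n\to\delta_0$ is used, never convergence of higher moments.
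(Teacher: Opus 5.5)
The paper gives no proof of this statement; it is quoted from Arizmendi \cite{MR3158548}, so there is no in-text argument to compare against. Your main route is correct and is the standard argument behind the cited result. Since $\kappa_1[\mu_n]=0$ and $\kappa_2[\mu_n]=1$, $\rho_n$ is a probability measure with $\int t^2\rho_n(dt)=\kappa_4[\mu_n]\to 0$, so $\rho_n\to\delta_0$. In the Bercovici--Pata normalization, the pair $(\gamma_n,\sigma_n)$ with $\sigma_n=(1+t^2)^{-1}\rho_n$ and $\gamma_n=-\int t(1+t^2)^{-1}\rho_n(dt)$ then converges to $(0,\delta_0)$, which is the pair of $g^{\circledast}_{\,1}$. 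Continuity of the L\'evy--Khintchine correspondence finishes the proof.

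Two points need tightening.

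First, the identity $\kappa_\ell[\mu_n]=\int t^{\ell-2}\rho_n(dt)$ does not follow by passing to the limit in the approximations of Theorem \ref{thm:ID_as_limit_arrays}, because weak convergence does not transport cumulants. Read it off the L\'evy--Khintchine formula directly instead. Expand $\log\hat{\mu}_n$ at $0$, or the Voiculescu transform, the Boolean self-energy, or the monotone generator at $\infty$. This is legitimate through order $4$ since $m_4[\mu_n]<\infty$ forces $\int t^2\rho_n(dt)<\infty$.

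Second, your alternative route does not avoid the continuity theorem as claimed. Asserting that the data of a subsequential limit must equal $\lim\rho_n$ is precisely the necessity half of that theorem.

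A genuinely moment-based version does work. With $\kappa_1[\mu_n]=0$ and $\kappa_2[\mu_n]=1$ one has $m_4[\mu_n]=m_4[g^{\circledast}_{\,1}]+\kappa_4[\mu_n]$, so fourth moments are bounded. Hence a subsequential limit $\mu$ of $(\mu_{n_j})$ has mean $0$ and variance $1$ by uniform integrability. It also lies in $\mathcal{ID}(\circledast)$: apply Theorem \ref{thm:ID_as_limit_arrays} to $j$-th convolution roots of $\mu_{n_j}$. Writing $\rho$ for its L\'evy--Khintchine measure, Fatou gives $\int t^2\rho(dt)=\kappa_4[\mu]=m_4[\mu]-m_4[g^{\circledast}_{\,1}]\le\liminf_j\kappa_4[\mu_{n_j}]=0$. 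So $\rho=\delta_0$ and $\mu=g^{\circledast}_{\,1}$. This uses only closedness of $\mathcal{ID}(\circledast)$ and injectivity of the L\'evy--Khintchine parametrization. It therefore also sidesteps the monotone continuity theorem you identified as the main obstacle.
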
 
It can be easily verified that $\kappa_n[\mu_{n}]\rightarrow\kappa_n[g^{\circledast}_{\, 1}]$, for $n=1,2,3,4$, under the hypothesis in Theorem \ref{thm:arizmendi_fourth_cumulant}. 
Since convergence in cumulants and convergence in moments are equivalent, the last theorem enacts a drastic simplification of the so-called method of moments. 
This type of phenomenon was first observed by Nualart and Peccati in \cite{MR2118863}, where they proved that, in a fixed Wiener chaos, convergence of the fourth moment is equivalent to convergence in distribution to a classical Gaussian distribution. 
Further extensions and related developments of the latter are commonly known as \emph{fourth-moment theorem} (or phenomenon). 
In this direction, we mention the papers \cite{MR3813981} on Wiener chaos in Poisson spaces, \cite{MR2978133} on  $\boxplus$-Wigner chaos, and \cite{MR3050508} on Markov operators. 

Having established a limit theorem, a natural follow-up question concerns the rate or speed of convergence. 
For Theorem \ref{thm:arizmendi_fourth_cumulant}, and in the classical and free cases, this question was addressed by Arizmendi and Jaramillo in \cite{MR3208324} in terms of the Kolmogorov distance. 
\begin{Theorem}[{\cite[Theorems 1.5 and 1.6]{MR3208324}}]\label{thm:arizmendijaramillo_4th_rate}
	If $\mu\in\mathcal{ID}(\circledast)$ with $\circledast\in\{*,\boxplus\}$ has mean zero and variance one, then there exists a  constant $C_{\circledast}>0$ such that 
	\begin{align}\label{eqn:example_moment_bound}
		\left\{
		d_{\text{Kol}} \, \left(  \mu , g^{\circledast}_{\, 1} \right) \right\}^{2} \leq C_{\circledast} \, \Big| \, m_4[\mu] - m_4[ g^{\circledast}_{\, 1} ] \, \Big|, 
	\end{align}
where $d_{\text{Kol}} \, (\cdot, \cdot)$ denotes the Kolmogorov distance. 
\end{Theorem}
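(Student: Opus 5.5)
We would prove Theorem \ref{thm:arizmendijaramillo_4th_rate} through the L\'evy--Khintchine representation of $\mu$. Since $\mu\in\mathcal{ID}(\circledast)$ has finite fourth moment, mean zero and variance one, its cumulants of order $\ell\geq 2$ can be written as $\kappa^{\circledast}_2[\mu]=\sigma^2+\int t^2\,\rho(dt)=1$ and $\kappa^{\circledast}_\ell[\mu]=\int t^\ell\,\rho(dt)$ for $\ell\geq 3$. Here $\sigma^2\geq 0$ is the Gaussian part and $\rho$ is the L\'evy measure, which has finite fourth moment. Hence $\kappa_4[\mu]=\int t^4\rho(dt)\geq 0$. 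This vanishes for the Gaussian in both the classical and the free case. Expressing $m_4$ through the cumulants gives $m_4[\mu]-m_4[g^{\circledast}_1]=\kappa_4[\mu]$, because $\kappa_1=0$ and $\kappa_2=1$, so the lower-order cumulants contribute identically to both sides. The claim therefore reduces to showing $d_{\text{Kol}}(\mu,g^{\circledast}_1)^2\leq C\int t^4\rho(dt)$.

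We would decompose $\mu$ as $g^{\circledast}_{\sigma^2}\circledast\pi_\rho$, where $\pi_\rho$ is the centered $\circledast$-Poisson-type part with L\'evy measure $\rho$ and variance $v:=\int t^2\rho(dt)=1-\sigma^2$. By Cauchy--Schwarz, the third cumulant satisfies $|\kappa_3|\leq v^{1/2}\kappa_4^{1/2}$.

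In the classical case we would use a smoothing, or Stein, argument. Write $\mu=g_{\sigma^2}\ast\pi_\rho$ and compare $\pi_\rho$ with $g_v$. The Stein equation for the infinitely divisible law is $\E[Xf(X)]=\sigma^2\E f'(X)+\E\int t\,(f(X+t)-f(X))\rho(dt)$. Taylor expanding to second order gives, for smooth $f$,
\[
\bigl|\E[f'(X)-Xf(X)]\bigr|\leq \tfrac12\|f''\|_\infty\int|t|^3\rho(dt)\leq C\|f''\|_\infty\,v^{1/2}\kappa_4^{1/2}.
\]
This yields a smooth-Wasserstein bound of order $\kappa_4^{1/2}$. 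A standard smoothing inequality, passing from smooth test functions to indicators at the cost of a square root, would give only $d_{\text{Kol}}\lesssim \kappa_4^{1/4}$. That is too weak, since the target is $d_{\text{Kol}}\lesssim\kappa_4^{1/2}$.

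We would therefore work directly with the Kolmogorov Stein solution. For $f=f_z$ the solution of the Stein equation with $h=\Indi{(-\infty,z]}$, the function $f$ is bounded and Lipschitz and $xf(x)$ is Lipschitz. We would bound $\bigl|\int t\,(f(x+t)-f(x)-tf'(x))\rho(dt)\bigr|$ by using the jump in $f'$ only on the set $\{|x-z|\le|t|\}$. This produces a term $\int t^2\,\mathbb{P}(|X-z|\leq |t|)\rho(dt)$, which needs a concentration estimate for $X$, for instance $\mathbb{P}(X\in I)\lesssim |I|+\kappa_4^{1/2}$, proved by a bootstrap. Combining, one obtains $d_{\text{Kol}}\lesssim\int|t|^3\rho\lesssim\kappa_4^{1/2}$, which is the claimed inequality.

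In the free case we would work with Cauchy transforms and the free L\'evy--Khintchine formula $\phi_\mu(z)=\sigma^2/z+\int \frac{t^2}{z-t}\rho(dt)$ (up to normalization), compared with the semicircle $\phi(z)=1/z$. The difference is $\int t^2\bigl(\frac{1}{z-t}-\frac1z\bigr)\rho(dt)=\int \frac{t^3}{z(z-t)}\rho(dt)$, which is of size $\kappa_4^{1/2}/|z|\,\mathrm{Im}z$. We would transfer this to $G_\mu-G_{g_1}$ via subordination or the functional equation, and then apply a Bai-type inequality bounding Kolmogorov distance by integrals of $|G_\mu-G_{g_1}|$ along $\mathrm{Im}z=\varepsilon$, using that the semicircle has bounded density. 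The main obstacle in both cases is to avoid losing the square root when passing to Kolmogorov distance. Classically this is the concentration bootstrap. In the free case it is controlling $G_\mu$ near the real axis, where the semicircle density vanishes at the edges $\pm2$; there we expect to exploit the compact-support-type regularity of free convolution with a semicircular component, or to choose $\varepsilon\sim\kappa_4^{1/2}$ appropriately.
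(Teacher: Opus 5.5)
The paper does not prove this statement; it quotes it from Arizmendi--Jaramillo. Your sketch therefore has to stand on its own, and it does not yet. The reduction is correct: $m_4[\mu]-m_4[g^{\circledast}_{\,1}]=\kappa_4[\mu]=\int t^4\rho(dt)$, $\int|t|^3\rho(dt)\le v^{1/2}\kappa_4[\mu]^{1/2}\le\kappa_4[\mu]^{1/2}$, and the Stein identity with its Taylor bound are fine. But in both cases the step that avoids the square-root loss is asserted, not proved.

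In the classical case everything rests on the concentration bound $\mathbb{P}(X\in I)\lesssim|I|+\kappa_4^{1/2}$, and the bootstrap you propose does not give it. Inserting $\mathbb{P}(|X-z|\le|t|)\le 2|t|/\sqrt{2\pi}+2\,d_{\text{Kol}}(\mu,g^{\ast}_{\,1})$ into $\int t^2\,\mathbb{P}(|X-z|\le|t|)\rho(dt)$ yields $d_{\text{Kol}}\le C\int|t|^3\rho(dt)+2v\,d_{\text{Kol}}$ with $v=1-\sigma^2$. This can be absorbed only if $v<1/2$, so it fails exactly when the Gaussian component is small (e.g.\ a centred compound Poisson law). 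This part can be repaired by a Chen--Shao argument. Your identity reads $\E[Xf(X)]=\sigma^2\E f'(X)+\E\int f'(X+s)K(s)\,ds$ with $K\ge 0$, $\int K(s)\,ds=v$ and $\int|s|K(s)\,ds=\frac12\int|t|^3\rho(dt)$. Take $f'=\Indi{[a-\delta,b+\delta]}$, so $|f|\le\frac{b-a}{2}+\delta$, and $\delta=\int|t|^3\rho(dt)$. This gives $\mathbb{P}(a\le X\le b)\le(b-a)+2\int|t|^3\rho(dt)$, after which your Kolmogorov estimate closes with $d_{\text{Kol}}\le C\int|t|^3\rho(dt)$. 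Also, $xf_z(x)$ is not Lipschitz uniformly in $z$; use instead the local bound $|(w+u)f_z(w+u)-wf_z(w)|\le(|w|+\sqrt{2\pi}/4)|u|$ together with $\E|X|\le 1$.

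The free case is only a programme, and its missing step is the hard one. The estimate $|\phi_\mu(z)-\phi_{g_1}(z)|\le\kappa_4^{1/2}/(|z|\,\mathrm{Im}\,z)$ is of order one at the height $\mathrm{Im}\,z=\varepsilon\sim\kappa_4^{1/2}$ you want to use in Bai's inequality. Going through $F_\mu(z)+\phi_\mu(F_\mu(z))=z$, with $F=1/G$, helps only in the bulk. One gets $(F_\mu-F_{g_1})\bigl(1-1/(F_\mu F_{g_1})\bigr)=\phi_{g_1}(F_\mu)-\phi_\mu(F_\mu)$. Near the real axis $|1-G_{g_1}(x)^2|=\sqrt{4-x^2}$ and $\mathrm{Im}\,F_{g_1}(x)=\sqrt{4-x^2}/2$. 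So even granting an a priori stability argument, the error is formally of order $\kappa_4^{1/2}/(4-x^2)$, which blows up at $\pm 2$. Controlling this is exactly the content of a free Berry--Esseen theorem and cannot be left as an expectation.

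A short way to close both cases is to let infinite divisibility do the work. Write $\mu=\mu_n^{\circledast n}$, where $\mu_n$ is the $\circledast$-infinitely divisible law with cumulants $\kappa_\ell[\mu]/n$. Then $m_2[\mu_n]=1/n$ and $m_4[\mu_n]=\kappa_4[\mu]/n+c_{\circledast}/n^2$, with $c_{\ast}=3$ and $c_{\boxplus}=2$. Cauchy--Schwarz gives $n\int|t|^3\mu_n(dt)\le(\kappa_4[\mu]+c_{\circledast}/n)^{1/2}$.

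For $\circledast=\ast$, the classical Berry--Esseen inequality for i.i.d.\ sums applies directly. For $\circledast=\boxplus$, use the free Berry--Esseen theorem of Chistyakov and G\"otze. Its rate is $n^{-1/2}$ times a third/fourth-moment quantity of the normalized summand $\mathrm{Dil}_{\sqrt n}(\mu_n)$, and that quantity is again at most $C(\kappa_4[\mu]+2/n)^{1/2}$. Either way $d_{\text{Kol}}(\mu,g^{\circledast}_{\,1})\le C(\kappa_4[\mu]+c_{\circledast}/n)^{1/2}$, and letting $n\to\infty$ gives the claim.
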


We refer to bounds that are expressed in terms of moments (resp., cumulants) as \emph{moment-type} (resp., \emph{cumulant-type}). % in this manuscript.  
The inequality \eqref{eqn:example_moment_bound} is then an example of a moment-type upper bound. 
In this manuscript, we succeed in providing cumulant-type upper bounds that extend the fourth-moment phenomenon to the non-Gaussian regime, at the cost of increasing the number of moments-cumulants considered. 
Another precedent to our work is the following theorem due to Arizmendi.

\begin{Theorem}[{\cite[Theorem 4.1]{MR3158548}}]\label{thm:arizmendi_cumulants_convergence}
Let $\mu_{\infty}\in \mathcal{ID}(\circledast)$ with L\'{e}vy pair $(\gamma,\sigma)$. 
Suppose $\{\mu_{n}\}_{n \in  \N} \subset \mathcal{ID}(\circledast)$ is a sequence of infinitely divisible probability measures. 
If $\sigma$ is supported over $m$ points, 
then the condition $\kappa_{n}[\mu_{n}] \rightarrow\kappa_{n}[\mu_{\infty}]$ for $n=1,\ldots,2m+2$ implies the convergence in distribution of $\mu_n$ towards $\mu_{\infty}$. 
\end{Theorem}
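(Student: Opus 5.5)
The plan is to translate everything into the Lévy measure, where cumulants of order at least two become moments of a finite positive measure, and then use a tightness-plus-moment-determinacy argument. In all four cases the Lévy--Khintchine representation (classical, free and Boolean by Bercovici--Pata, monotone by Anshelevich--Williams) parametrizes $\mathcal{ID}(\circledast)$ by a Lévy pair $(\gamma,\sigma)$, with $\gamma\in\R$ and $\sigma$ a finite positive Borel measure on $\R$. The correspondence is continuous in both directions: $\mu_n\to\mu_\infty$ in distribution if and only if $\gamma_n\to\gamma_\infty$ and $\sigma_n\to\sigma_\infty$ weakly. Moreover, when $\mu$ has finite moments up to order $\ell$, one has $\kappa_1[\mu]=\gamma+\int t\,\sigma(dt)$-type relations, in the normalization where
\[
\kappa_{\ell+2}[\mu]=\int_{\R} t^{\ell}\,\rho(dt),\qquad \ell\ge 0,
\]
for the measure $\rho(dt)=(1+t^2)\,\sigma(dt)$, or directly $\sigma$ in the ``Kolmogorov'' normalization. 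I will choose the normalization so that $\kappa_{\ell+2}$ is the $\ell$-th moment of a finite measure $\rho$. Then $\rho_\infty$ is supported on the same $m$ points as $\sigma_\infty$, and the hypotheses become: $\gamma$-information converges via $\kappa_1$, and the moments $\int t^{j}\rho_n(dt)\to\int t^j\rho_\infty(dt)$ for $j=0,\dots,2m$.

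The core step is a moment-matching lemma. Suppose $\rho_n$ are finite positive measures whose moments of order $0,\dots,2m$ converge to those of a measure $\rho_\infty$ supported on $m$ points $x_1,\dots,x_m$. Then $\rho_n\to\rho_\infty$ weakly. To prove it, set $p(t)=\prod_{i=1}^m (t-x_i)^2$, a polynomial of degree $2m$. Then $\int p\,d\rho_n\to\int p\,d\rho_\infty=0$. Convergence of the second moment $j=2$ (valid since $m\ge1$) gives tightness of $\{\rho_n\}$ with bounded mass. Take any weakly convergent subsequence with limit $\rho'$. Fatou's lemma for the nonnegative continuous $p$ gives $\int p\,d\rho'\le \liminf\int p\,d\rho_n=0$, so $\rho'$ is supported in $\{x_1,\dots,x_m\}$.

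It remains to identify $\rho'$. The moments of order $\le 2m-1$ pass to the limit by uniform integrability, since the $2m$-th moments are bounded. So $\rho'$ and $\rho_\infty$ are both supported on the same $m$ points and share moments of order $0,\dots,m-1$. The Vandermonde matrix is invertible, hence $\rho'=\rho_\infty$, and the whole sequence converges.

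To finish, translate back: $\rho_n\to\rho_\infty$ weakly with bounded second moments yields $\sigma_n\to\sigma_\infty$ weakly, together with convergence of $\int t\,\sigma_n(dt)$-type drift corrections. Combined with $\kappa_1$ convergence this gives $\gamma_n\to\gamma_\infty$. The continuity of the Lévy--Khintchine correspondence then gives $\mu_n\to\mu_\infty$.

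The main obstacle is bookkeeping the normalization. The different Lévy--Khintchine forms use the kernel $(1+tx)/(x-t)$ against $\sigma$, which differs from the ``moment'' measure $\rho$ by the factor $1+t^2$. One must check that the number of cumulants needed matches $2m+2$; the count $2+2m$ is exactly what the polynomial $p$ of degree $2m$ consumes. One must also check that the drift transforms continuously, which needs the uniform integrability of $t$ under $\rho_n/(1+t^2)$. Both are ensured by the bounded $(2m+2)$-th cumulant.
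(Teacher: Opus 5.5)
Your proof is correct. The paper does not actually prove this statement; it quotes it from Arizmendi as background, so there is no in-paper argument to match, and your write-up stands as a self-contained qualitative proof. Your steps are:
\begin{itemize}
\item writing $\kappa_{\ell+2}[\mu]=\int t^{\ell}\rho(dt)$ with $\rho=(1+t^2)\sigma$, which holds with the same $\rho$ for $\ast,\boxplus,\uplus,\rhd$;
\item using $p(t)=\prod_i (t-x_i)^2$ together with tightness and lower semicontinuity to force every subsequential limit of $\rho_n$ onto $\{x_1,\dots,x_m\}$;
\item identifying the weights through a Vandermonde system;
\item invoking continuity of the L\'evy--Khintchine parametrization.
\end{itemize}
It also covers the monotone case, because it uses nothing about $\circledast$ beyond that parametrization and its continuity.

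The closest argument in the paper is Lemma~\ref{lemma:polynomial_cumulant_bound}, which rests on the same vanishing-polynomial idea. Integrating $x^2\prod_i(x-x_i)^2$ against $\eta$ is the same as integrating $\prod_i(x-x_i)^2$ against the Kolmogorov measure $\sigma^2\delta_0+x^2\eta(dx)$, in the notation of Theorem~\ref{thm:main_theorm}. The paper then departs from your route in two places:
\begin{itemize}
\item Where you use compactness and the Vandermonde matrix, it uses explicit interpolating polynomials $f_k$ with two auxiliary nodes $x_{m+1},x_{m+2}$ to bound each $|\eta[J_k]-\Delta[\{x_k\}]|$. This is why it needs cumulants up to order $2m+4$.
\item Where you use continuity of the L\'evy--Khintchine bijection, it builds explicit couplings in $\mathcal{W}_{nc,1}$.
\end{itemize}
What the paper gains is a rate $\mathcal{E}_{\nu}[\mu]^{1/2}$. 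What it pays is the extra hypothesis $\zeta^2\le\sigma^2$ and the need for a commutative $\circledast$, which excludes $\rhd$. Your argument gives no rate but needs neither restriction.

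Two small remarks on your write-up. First, the drift step needs no uniform integrability. Since $\int t\,\sigma_n(dt)=\int \frac{t}{1+t^2}\,\rho_n(dt)$ and $t/(1+t^2)$ is bounded and continuous, weak convergence of $\rho_n$ already gives $\gamma_n\to\gamma_\infty$ from convergence of $\kappa_1$. Second, two facts are standard but should be cited for each of the four convolutions:
\begin{itemize}
\item the identity $\kappa_{\ell+2}=\int t^{\ell}\,d\rho$ for measures with only finitely many moments;
\item the equivalence between $\mu_n$ having a finite $(2m+2)$-th moment and $\int t^{2m}\rho_n(dt)<\infty$.
\end{itemize}
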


The explicit bijective correspondence between L\'{e}vy pairs  and $\circledast$-infinitely divisible distributions  can be found in \cite[Section 4]{MR3183991}. 
For our purposes, it is enough to recall that each $\mu_{\infty} \in  \mathcal{ID}(\circledast) $  determines a unique pair $(\gamma,\sigma)$, called \emph{L\'{e}vy pair}, where $\gamma$ is a real number and $\sigma$ is a finite measure over $\R$, and vice versa. 
Moreover, under this bijection, probability distributions that are not necessarily $\circledast$-Gaussian are obtained from L\'{e}vy pairs $(\gamma,\sigma)$ where $\sigma$ is supported on a finite set.

\begin{Notation}%\label{def:compound_poisson}
Let $\mathcal{M}(\R)$ denote the set of finite measures over $\R$. 
For every non-null measure $\Poiss \in \Mca(\R)$, we let $\tau_{\Poiss}$ denote the compound $\circledast$-Poisson distribution with rate $\Poiss[\R]$ and jump distribution $\Poiss/\Poiss[\R]$. 
The probability measure $\tau_\Poiss$ then satisfies 
\begin{align}\label{eqn:def_tau_nu}
			\tau_{\Poiss} = \lim_{n \to \infty} {\textstyle \left(\left(1-\frac{\Poiss[\R]}{n}\right)\delta_{0}+\frac{1}{n}\Poiss\right)^{\circledast n}  }
			\qquad \text{and} \qquad 
			\kappa_{n}[\tau_{\Poiss}] = m_n[\Poiss].
\end{align}
Additionally, if  $\Poiss\in \Mca(\R)$ has finite first moment $m_1[\Poiss] <  \infty$, we let $\pi_{\Poiss}$ denote $ \tau_{\Poiss} \circledast \delta_{-m_1[\Poiss]}$.  
\end{Notation}

%\jcomment{Decir algo del caso monótono, esencialmente hay que tomar en cuenta que la convolución monótona no es conmutativa.} 

Note that $g_{\sigma^2}$, $\tau_{\Poiss}$, and $\delta_\alpha$ are  infinitely divisible, and so is $g_{\sigma^2}  \circledast \pi_{\Poiss} \circledast \delta_a$. Our main contribution is a cumulant-type upper bound for the (non-commutative) $1$-Wasserstein distance (see Section \ref{section:wasserstein}) between $\circledast$-infinitely divisible distributions of the form 
%
%\begin{align} g_{\sigma^2}  \circledast \tau_{\Poiss} \circledast \delta_a \qquad \text{and} \qquad g_{\zeta^2}  \circledast \tau_{\Delta} \circledast \delta_b \end{align}
%
\begin{align}
g_{\sigma^2}  \circledast \pi_{\Poiss} \circledast \delta_a
\qquad \text{and} \qquad 
g_{\zeta^2}  \circledast \pi_{\Delta} \circledast \delta_b
\end{align}
with $\Poiss[\R \backslash \{0\}] > 0$ and $\Delta = \sum_{n=1}^{m} \lambda_n \delta_{x_n}$ a positive discrete measure with finite support contained in $\R \backslash \{0\}$. 
This allows us to compare distributions that are a three-fold convolution of a centered $\circledast$-Gaussian, a compound $\circledast$-Poisson, and a point Dirac measure (sometimes referred as a \emph{drift}). 
%Notice that $g_{\sigma^2}$ and  $g_{\zeta^2}$ are centered $\circledast$-Gaussian, $\tau_{\Poiss}$ and $\tau_{\Delta}$ are compound $\circledast$-Poisson, and $\delta_a$ and $\delta_b$ are Dirac measures (sometimes referred as a \emph{drifts}). 

\begin{Theorem}\label{thm:main_theorm}
Let $\circledast \in \{ \ast, \boxplus ,\uplus\}$. 
Take $\mu = g_{\sigma^2}  \circledast \pi_{\Poiss} \circledast \delta_a$ and $\nu = g_{\zeta^2}  \circledast\pi_{\Delta} \circledast \delta_b$ with $\Poiss$ and $\Delta$ as above. 
If $0\leq\zeta^2\leq \sigma^2$ and $\mu$ has finite moments of order $1,\ldots,2m+4$, 
then there exists a constant  $C(\Delta,m_2[\mu])>0$, that depends only on $\Delta$ and the second moment of $\mu$, such that 
\begin{align}\label{eqn:main_cumulant_inequality}
\freeWass{1} (\mu,\nu)  
\leq  
{\color{black} C(\Delta,m_2[\mu])} \cdot \left( \sum_{n=1}^{2m+4} \Big| \kappa^{\circledast}_n[ \mu ]-\kappa^{\circledast}_n[ \nu ] \Big| \right)^{ 1/2} 
\end{align}
where $\freeWass{1} (\cdot , \cdot)$ denotes the non-commutative $1$-Wasserstein distance, provided that the sum inside the parenthesis is smaller or equal than $1$. 
\end{Theorem}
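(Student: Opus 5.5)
We will build $\mu$ and $\nu$ explicitly as distributions of sums of $\circledast$-independent random variables on one $C^*$-probability space, and then estimate $\freeWass{1}(\mu,\nu)$ by the $L^2(\varphi)$-norm of the difference, $\freeWass{1}(\mu,\nu)\le \varphi[(X-Y)^2]^{1/2}$. Write $X = G_\sigma + P_{\Poiss} + a$ and $Y = G_\zeta + P_\Delta + b$. Here $G_\sigma, P_\Poiss$ are $\circledast$-independent, with laws $g_{\sigma^2}$ and $\pi_\Poiss$. We split $G_\sigma = G_\zeta + G'$ with $G'$ a centered $\circledast$-Gaussian of variance $\sigma^2-\zeta^2\ge 0$, independent of the rest; this is where $\zeta^2\le\sigma^2$ is used. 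The Poisson parts are coupled at the level of Lévy measures. By the Lévy–Itô type construction (free/Boolean/tensor Poisson random measures, or the limit \eqref{eqn:def_tau_nu} applied to a coupled pair of jump measures), take a transport plan $\gamma$ on $\R^2$ between $\Poiss$ and $\Delta$ in which mass may also be sent to or from $0$ (jumps at $0$ contribute nothing). Then realize $P_\Poiss-P_\Delta$ as a compensated compound $\circledast$-Poisson element driven by $\gamma$ and the function $(s,t)\mapsto s-t$. The $L^2$ isometry for centered compound Poisson elements, which holds in all three settings, then gives
\begin{align*}
\varphi[(X-Y)^2] \le (\sigma^2-\zeta^2) + \int (s-t)^2\,\gamma(ds,dt) + (a-b)^2 .
\end{align*}

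It remains to choose $\gamma$ so that $\int (s-t)^2 d\gamma$ is controlled linearly by the cumulant differences. Note that $\kappa_1$ differences control $|a-b|$ up to centering, and $\kappa_2$ gives $\sigma^2+m_2[\Poiss]$ against $\zeta^2+m_2[\Delta]$. For $n\ge3$, $\kappa_n[\mu]-\kappa_n[\nu]=m_n[\Poiss]-m_n[\Delta]$. We will therefore use the measures $s^2\Poiss(ds)$ and $t^2\Delta(dt)$, whose moments of order $0,\dots,2m+2$ are exactly the cumulants of orders $2,\dots,2m+4$. The plan is to use a polynomial argument in the spirit of Arizmendi's Theorem \ref{thm:arizmendi_cumulants_convergence}. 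Let $p(s)=\prod_{k=1}^m (s-x_k)^2$, of degree $2m$. Then
\begin{align*}
\int s^2 p(s)\,\Poiss(ds) = \int s^2p\,\Poiss - \int t^2 p\,\Delta ,
\end{align*}
which is a linear combination of cumulant differences with coefficients depending only on $\Delta$. This forces $s^2\Poiss$ to concentrate near $\{x_1,\dots,x_m\}$ in a quantitative way. Next we partition $\R$ into neighbourhoods $U_k$ of the $x_k$, of radius half the minimal gap among $\{0,x_1,\dots,x_m\}$. Testing against Lagrange-type polynomials $q_k$ of degree $\le 2m+2$ that interpolate $s^2$-weighted indicators shows that $\Poiss(U_k)$ is close to $\lambda_k$. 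The error is bounded by the same cumulant sum, plus the tail mass outside $\cup U_k$ already controlled by $p$. Then $\gamma$ sends $\Poiss|_{U_k}$ to $x_k$, matching $\min(\Poiss(U_k),\lambda_k)$. All excess mass goes to or from $0$; for excess mass near $x_k$ the cost is $\lesssim |\Poiss(U_k)-\lambda_k|$, and for mass of $\Poiss$ outside the $U_k$ the cost is $\int s^2 d\Poiss$ there, bounded via $p$. The transport cost $\int_{U_k}(s-x_k)^2 d\Poiss$ is bounded by $\int s^2p\,d\Poiss$, since $p/(s-x_k)^2$ is bounded below on $U_k$ away from $0$.

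Summing the pieces gives $\varphi[(X-Y)^2]\le C\sum_{n\le 2m+4}|\kappa_n[\mu]-\kappa_n[\nu]|$. The constant $C$ depends on $\Delta$ and on bounds for $m_2[\Poiss]\le m_2[\mu]$; the latter are needed to absorb polynomial coefficients and the mixed Gaussian term. The hypothesis that the sum is $\le 1$ lets us replace higher powers of it by the first power. Taking the square root yields \eqref{eqn:main_cumulant_inequality}. The main obstacle is the mass-near-$0$ issue: $\Poiss$ may carry mass near $0$, which the weight $s^2$ suppresses, and transporting it to $0$ costs exactly $s^2$. So the bookkeeping must be done entirely with the weighted measures, using the degrees $2m+4 = 2+2m+2$. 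A second difficulty is justifying the $L^2$ coupling in the free and Boolean cases; for this we would rely on constructing the free or Boolean Poisson process over $\R^2$ with intensity $\gamma$ on a full Fock space.
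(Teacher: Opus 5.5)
Your route differs from the paper's, and its coupling half is sound. In place of the seven-step triangle-inequality chain built on $\classWass{1}$-estimates for L\'evy measures, you use a single global coupling driven by a transport plan $\gamma$ with a reservoir at $0$, followed by one $L^2$ bound. The coupling does not even need Fock spaces: run the $n$-fold approximation of Lemma \ref{lemma:convolution_inequality} with the bivariate law $(1-\gamma[\R^2]/n)\,\delta_{(0,0)}+\gamma/n$ and add variances. Your transport bookkeeping is also correct.

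\textbf{First gap: the term $\sigma^2-\zeta^2$ is never controlled.} You claim that $\int s^2p\,d\Poiss$ is a linear combination of cumulant differences, i.e.\ that the moments of $s^2\Poiss(ds)$ of orders $0,\dots,2m+2$ are the cumulants of orders $2,\dots,2m+4$. This is false at order $0$, since $\kappa_2[\mu]=\sigma^2+m_2[\Poiss]$ and $\kappa_2[\nu]=\zeta^2+m_2[\Delta]$. The quantity that actually is such a combination is $(x_1\cdots x_m)^2(\sigma^2-\zeta^2)+\int s^2p\,d\Poiss$. As written, nothing controls the $\sigma^2-\zeta^2$ term in your $L^2$ bound, and the same term re-enters through the order-$2$ coefficient of your test polynomials. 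Your passing reference to absorbing ``the mixed Gaussian term'' into $m_2[\mu]$-dependent constants does not address it. The missing idea is the positivity argument of Lemma \ref{lemma:polynomial_cumulant_bound}. Both summands are non-negative \emph{because} $\zeta^2\le\sigma^2$, so each is at most $(2U_\Delta)^{2m}\mathcal{E}_\nu[\mu]$. This, not the splitting $G_\sigma=G_\zeta+G'$, is where the hypothesis is really used.

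\textbf{Second gap: the interpolation step is underspecified exactly where it matters.} To get $|\Poiss(U_k)-\lambda_k|\lesssim\mathcal{E}_\nu[\mu]$, you need $h_k=s^2q_k$ with $|h_k-\Indi{U_k}|\le C_\Delta\,s^2p$ on all of $\R$. This imposes two requirements.
\begin{itemize}
\item First-order contact: $h_k(x_k)=1$, $h_k'(x_k)=0$, and double zeros at the $x_j$, $j\ne k$. With plain Lagrange conditions the error contains $\int_{U_j}|s-x_j|\,d\Poiss$, which is only $O(\mathcal{E}_\nu[\mu]^{1/2})$, and the final rate drops to $\mathcal{E}_\nu[\mu]^{1/4}$.
\item A degree bound $\deg h_k\le 2m+2=\deg(s^2p)$. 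Otherwise $h_k$ is not dominated by $s^2p$ off $\bigcup_jU_j$. So taking $\deg q_k=2m+2$, as your count $2m+4=2+2m+2$ suggests, breaks the tail estimate.
\end{itemize}
A choice that works is $h_k(s)=s^2\prod_{j\ne k}(s-x_j)^2\,(\alpha_k+\beta_k(s-x_k))$, of degree $2m+1$, with $\alpha_k,\beta_k$ fixed by $h_k(x_k)=1$ and $h_k'(x_k)=0$.

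The paper obtains first-order contact differently, by squaring $f_k$ and adding the auxiliary roots $x_{m+1},x_{m+2}$. Since $f_k^2$ has degree $2m+4$, its tail bound in Step VI has to pass through the degree-$(2m+6)$ polynomial $x^2\prod_{j=1}^{m+2}(x-x_j)^2$.

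With both repairs your argument closes. It then uses only cumulants of order at most $2m+2$, and the constant depends only on $x_1,\dots,x_m$.
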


\begin{Remark}
For $ \Poiss , \Delta \in \Mca(\R)$ with $\Delta = \sum_{n=1}^{m} \lambda_n \delta_{x_n}$, the constant $C(\Delta,m_2[\mu])$ %appearing in \eqref{eqn:main_cumulant_inequality} 
from Theorem \ref{thm:main_theorm} can be explicitly taken as
\begin{align}\label{eqn:constant_main_theorem}
C(\Delta,m_2[\mu])  =
1 + (M_\Delta +
2 m_2[\mu]^{1/2} ) \left( \frac{ 2 \, U_\Delta}{  \radio_\Delta } \right)^{m}
\end{align}
where $M_\Delta$ denotes the term 
\begin{equation*}
M_\Delta
= 
3 
+
m \, U_\Delta  \, N_\Delta 
+
2U_\Delta  \, m \, (m+1) \, N^2_\Delta \, 
 \left( \frac{ 2 \, U_\Delta}{  \radio_\Delta } \right)^{m}  
\end{equation*}
with $U_{\Delta}, r_{\Delta}$, and $N_{\Delta}$
 given by \eqref{def:U_Delta_L_Delta}, \eqref{def:D_Delta_d_Delta}, and  \eqref{eq:Ndeltadef}, respectively. We make no claim on the sharpness of the constant \eqref{eqn:constant_main_theorem}.
\end{Remark}
It is worth emphasizing that our proof of Theorem \ref{thm:main_theorm} relies on fundamental features of $\circledast$-cumulants and elementary computations. 
%
%{\color{}}
In particular, we use the fact that $\circledast \in \{ \ast, \boxplus ,\uplus\}$ is commutative, meaning that $\mu \circledast \nu$ and $\nu \circledast \mu$ yield the same distribution.  
We expect to address the monotone case $\rhd$, which is non-commutative, elsewhere. 
Moreover, our proof exploits the following two properties of the free $1$-Wasserstein distance:
\begin{align}\label{eqn:free_Wp_subadditive}
	\freeWass{1} ( \mu_1 \circledast \mu_2 , {\nu}_1 \circledast {\nu}_2) \leq
	\freeWass{1} ( \mu_1 , {\nu}_1 ) + 
	\freeWass{1} ( \mu_2 , {\nu}_2), 
\end{align}
for any probability distributions $\mu_1,\mu_2,\nu_1,\nu_2\in \Pc(\R)$, and
\begin{align}\label{eqn:Wp_pi_nu}
\freeWass{1} ( \tau_{\Poiss_1} \circledast \delta_{-m_1[\Poiss_1]} , \tau_{\Poiss_2} \circledast \delta_{-m_1[\Poiss_2]} ) \leq
\classWass{1} ( \Poiss_1 , \Poiss_2 )
     +|m_1[\Poiss_1]-m_1[\Poiss_2]|,
\end{align}
for any finite measures $\Poiss_1 , \Poiss_2 \in \mathcal{M}(\R)$ with  $\Poiss_1[\R]=\Poiss_2[\R]$ and finite first moment. 
We leave the proofs of \eqref{eqn:free_Wp_subadditive} and \eqref{eqn:Wp_pi_nu}, and more details, for Section \ref{section:wasserstein}. 
%

%\subsection{Manuscript's organization}

%removecomment\fcomm{1. Write manuscript's organization} 

The rest of this manuscript is organized as follows. 
In Section \ref{section:preliminaries}, we provide some preliminaries on non-commutative probability, this includes basic properties of  $\ast$-probability and $C^{\ast}$-probability spaces as well as the concrete definition of $\circledast$-cumulants and the heuristic  %\ref{eqn:cpoisson_in_cumulants} 
that Definition \ref{def:compound_Poisson} yields genuine Poisson distributions. 
% 
%$\circledast$-infinitely divisible distributions. 
%removecomment \ \\ \ \\
%
In Section \ref{section:wasserstein}, we recall the definition of free Wasserstein distance of Biane and Voiculescu and give proofs for the relations \eqref{eqn:Wp_pi_nu} and \eqref{eqn:free_Wp_subadditive} above.  
%removecomment \ \\ \ \\
% 
Finally, in Section \ref{section:main_theorem}, we prove Theorem \ref{thm:main_theorm}, our main result, by splitting its proof into various steps. % for clarity. % and readability. 

%--------------------------------------------------------------------------------------%
%--------------------------------------------------------------------------------------%
%                             SECTION:FREE_PRELIMINARIES                               %
%--------------------------------------------------------------------------------------%
%--------------------------------------------------------------------------------------%

%removecomment \newpage 

\section{Preliminaries}\label{section:preliminaries}

\subsection{Non-commutative probability}\label{subsection:ncp}
A \emph{non-commutative probability space} is a pair $(\Ac,\varphi)$ where $\Ac$ is an algebra over $\C$ with multiplicative identity $1_{\Ac} \in \Ac$ and $\varphi: \Ac \rightarrow \C$ is a linear functional such that $\varphi(1_{\Ac})=1$. 
A non-commutative probability space $(\Ac,\varphi)$ is said to be \emph{tracial} if $\varphi(ab)=\varphi(ba)$ for all $a,b \in \Ac$. 
This is however a purely algebraic setting with no analytic structure. 
For this reason, when investigating analytic properties in non-commutative probability, it is customary to work with $\ast$-probability and $C^{\ast}$-probability spaces instead. 

\begin{Definition}\label{def:cstar_probability_spaces}
A non-commutative probability space $(\Ac,\varphi)$ is said to be a $\ast$\emph{-probability space} if it is endowed with an anti-linear operation $\ast: \Ac \rightarrow \Ac$ such that $(a^*)^* = a$, $(ab)^*  = b^* a^*$, and $\varphi(a^*a) \geq 0$ for every $a,b \in \Ac$. 
If, in addition, $\Ac$ is a $C^{\ast}$-algebra, we refer to the pair  $(\Ac,\varphi)$ as a $C^{\ast}$-\emph{probability space}. 
\end{Definition}

The book \cite{MR2266879} is a standard reference on  $\ast$-probability and $C^{\ast}$-probability spaces, covering general background and main features. 
Here we recollect some properties relevant to this manuscript. 

\begin{Example}\label{exa:star_probability_space}
Let $(\Omega, \Fc, \mu)$ be a classical probability space, i.e.,  $\Omega$ is a non-empty set, $\Fc$ is a $\sigma$-algebra of subsets of $\Omega$, and $\mu:\Fc \to [0,1]$ is a probability measure. 
A canonical example of a $\ast$-probability space  $(\Ac,\varphi)$ is the algebra of all classical random variables on $(\Omega, \Fc, \mu)$ with finite absolute moments of all orders. 
Namely, $\Ac = \cap_{ p \geq 1 } L^p(\Omega, \Fc, \mu)$ and  
$\varphi(a) = \int_{\Omega} a(\omega) \, \mu(d\omega)$ for each $a \in \Ac$.
\end{Example}

Recall that a complex algebra $\Ac$ is a $C^{\ast}$\emph{-algebra} if 
it is a Banach algebra with respect to some norm $\norm{\, \cdot \,}$ such that $\norm{ab} \leq \norm{a} \norm{b}$ for all $a,b \in \Ac$
and it is also endowed with an anti-linear operation $\ast: \Ac \rightarrow \Ac$ such that $(a^*)^* = a$, $(ab)^*  = b^* a^*$, and $\norm{aa^*} = \norm{a}^2$ for all $a,b \in \Ac$. 

\begin{Example}
A fundamental example of a $C^{\ast}$-probability space $(\Ac,\varphi)$ arises from Hilbert spaces $\Hc$ and unit vectors $\xi \in \Hc$ as follows.    
Let $\Ac$ be the algebra of bounded linear operators $B(\Hc)$ with the usual operator norm and 
take $ \varphi: \Ac \to \C$ given by $\varphi(T) = \langle  T \xi, \xi \rangle $ for each $T \in \Ac$. 
The corresponding $\ast$-operation is given by the adjoint, i.e., for each $T \in \Ac$, the operator $T^{\ast} \in \Ac$ is uniquely determined by the relation $\langle T \eta_1 , \eta_2 \rangle =\langle \eta_1 , T^{\ast} \eta_2 \rangle$ for all $\eta_1, \eta_2 \in \Hc$. 
\end{Example}

A \emph{representation} of a $\ast$-probability space  $(\Ac,\varphi)$ is a triple $(\Hc,\pi,\xi)$ where 
$\Hc$ is a Hilbert space, 
$\pi:\Ac \rightarrow B(\Hc)$ is a unital $\ast$-homomorphism, and 
$\xi \in \Hc$ is a unit vector such that $\varphi(a) = \langle \pi(a) \xi , \xi \rangle$ for every $a \in \Ac$. 

\begin{Remark}\label{rmk:representation_of_L_-infinity}
The  $\ast$-probability space from Example \ref{exa:star_probability_space} has a natural representation on the Hilbert space $\Hc = \mathcal{L}^2(\Omega, \Fc, P)$ of all square-integrable functions.
One simply takes the vector $\xi \in \Hc$ as the function $\xi(\omega)=1$ for all $\omega \in \Omega$ and $\pi(a)(T) = aT$ for all $a \in \Ac$ and $T \in \Hc$. 
Notice that this representation implies that, for any probability distribution $\mu \in \mathcal{P}_{\infty}(\R)$ with compact support, there always exists a $C^{\ast}$-probability space with a random variable whose  analytical distribution in the sense of \eqref{eqn:phi-moments-mu} is exactly $\mu$.
%
%Indeed, given $\mu \in \mathcal{P}_{\infty}(\R)$, if we take $\Omega = \R$, $\mathcal{F} = \mathcal{B}(\R)$, $P = \mu$, and $a = id_{\R}$, then $\pi(a) \in B(\mathcal{H})$ has analytical distribution $\mu$.
%
\end{Remark}

An element in a $C^\ast$-algebra $p \in \Ac$ is \emph{positive} if there exists $a \in \Ac$  such that $p = a^* a$. 
Moreover, the functional calculus for $C^\ast$-algebras, see \cite[Theorem 3.1]{MR2266879}, justifies the application of continuous functions to normal ---including positive--- elements. % in a $C^\ast$-algebra. 
In particular,  this implies that for any integer $k \geq 1$ and any $a \in \Ac$, there exists a positive $b \in \Ac$ that we identify with $(a^* a)^{k/2}$, in the sense that $b^2=(a^* a)^{k}$; 
in addition, if $(\Ac,\varphi)$ is a $C^{\ast}$-probability space, then $\varphi(b) \geq 0$, yielding the following definition.

\begin{Definition}\label{def:nc_pnorm}
Let $(\Ac,\varphi)$ be a $C^{\ast}$-probability space. 
The \emph{non-commutative $p$-norm} $\abs{\, \cdot \,}_p :  \Ac \rightarrow  \R_{\geq 0}$ is defined for each $a \in  \Ac$  by 
%the norm $\abs{\, \cdot \,} :  \Ac \rightarrow \Ac $ given for each $a \in  \Ac$ by 
	%
	\begin{align}\label{eqn:noncommutaive_pnorm}
		\abs{\, a \,}_p : =
	%	\norm{ \varphi[ (a^* a)^{p/2} ]}^{1/p}
	 	\sqrt[p]{  \, \varphi[ (a^* a)^{p/2} ] \ \ } .
	\end{align}
	%
%where $\norm{\, \cdot \,}$ denotes the usual norm on $\C$. 
%
%
In particular, for $p=1$, we have \begin{align}\label{eqn:noncommutaive_1norm} 	\abs{\, a \, }_1 = 	\varphi[ (a^* a)^{1/2} ] . \end{align}

\end{Definition}
%

%removecomment \jcomment{\ \\ Here we need to cover $C^{\ast}$-probability spaces and the non-commutative $p$-norm \eqref{eqn:noncommutaive_pnorm} }

%removecomment \fcomm{2. Finish writing preliminaries on non-commutative probability spaces}

\subsection{Notions of independence}

Let $(\mathcal{A}_i)_{i \in I}$ be a family of subalgebras of $\Ac$.
Suppose $a_1 \in \Ac_{i_1},a_2 \in \Ac_{i_2}, \ldots, a_n\in \Ac_{i_n}$ for some indexes $i_1,\ldots,i_n \in I$. 
We say that $a_1 a_2 \cdots a_n$ is \emph{an alternating product of elements of} $(\mathcal{A}_i)_{i \in I}$ if the indexes $i_k$ satisfy that $i_1 \neq i_2$, $i_2 \neq i_3$, $\ldots$, $i_{n-1} \neq i_n$.

\begin{Definition}\label{dfn:boolean_tensor_monotone}
	Let $(\Ac,\varphi)$ be a non-commutative probability space. A family $(\mathcal{A}_i)_{i \in I}$  of subalgebras of $\mathcal{A}$ is said to be   
	\begin{enumerate}
		\item[(\textbf{B})] \emph{Boolean independent} if for any alternating product $a_1 a_2 \cdots a_n$ of elements of  $(\mathcal{A}_i)_{i \in I}$ we have
		\[\varphi(a_1 a_2 \cdots a_n)=\varphi(a_1)\varphi(a_2)\cdots \varphi(a_n) \ .\] 
		\item[(\textbf{F})] \emph{free independent} if for any alternating product $a_1 a_2 \cdots a_n$ of elements of  $(\mathcal{A}_i)_{i \in I}$  we have 
		\begin{align*}
\varphi(a_1 a_2 \cdots a_n) = 0
\end{align*}
		whenever $\varphi(a_k) = 0$ for $k=1,2,\ldots,n$. 

		\item[(\textbf{T})] \emph{tensor independent} if for any (not necessarily alternating) product $a_1 a_2 \cdots a_n$ of elements of  $(\mathcal{A}_i)_{i \in I}$ we have 
		\[
		\varphi(a_1 a_2 \cdots a_n) \ \ = \prod_{B \in \ker[{\bm i}]} \varphi \left( \OrdAsProduct{a}{k}{B} \right) 
		\]
		where $\overset{\longrightarrow}{\prod}_{k \in B} a_k := a_{k_1} a_{k_2} \cdots a_{k_r}$
		%$\overset{\longrightarrow}{\underset{k \in B}{\prod}} a_k := a_{k_1} a_{k_2} \cdots a_{k_r}$ 
	provided $B = \{k_1 < k_2 < \cdots < k_r \} \subset [m] := \{1,2,\ldots,m\} $ and $\ker[{\bm i}]$ with ${\bm i}=(i_1,i_2,\ldots,i_n)$ denotes the set of all non-empty sets of the form $ \{k \in [m] : i_k = i\}$ with $i \in I$. 
		Essentially, the set $\ker[{\bm i}]$ is the partition of $[m]$ that keeps track of all variables that belong to the same algebra.  
	\end{enumerate}
\end{Definition}

%\begin{Example}
As an example for the notation $\ker[{\bm i}]$, let us consider ${\bm i}=(1,3,1,7,7,1)=(i_1,i_2,i_3,i_4,i_5,i_6)$. 
In this case, we have that $\ker[{\bm i}]$ denotes the partition $\{ \{ 1,3,6\}, \{2\}, \{4,5\}\}$ since $i_1 = i_3 = i_6$, $i_4=i_5$, and $i_1,i_2,i_4$ are all distinct values; 
additionally, the definition for tensor independence yields that $\varphi(a_1 a_2 \cdots a_6)$ would factor as $\varphi(a_1 a_3 a_6) \varphi(a_2) \varphi(a_4 a_5)$.
%\end{Example}

\begin{Remark}\label{rem:product_space}
Let $\circledast \in \{ \ast, \boxplus ,\uplus\}$. 
For any family of $C^{\ast}$-probability spaces $(\mathcal{A}_i,\varphi_i)_{i \in I}$, there always exist another $C^{\ast}$-probability space $(\mathcal{A},\varphi)$ together with injective homomorphisms $\Psi_i:\mathcal{A}_i \to \mathcal{A}$ satisfying $\varphi_i(a)= \varphi \circ \Psi_i(a)$ for every $a \in \mathcal{A}_i$ and $i \in I$ such that $(\Psi_i(\mathcal{A}_i))_{i \in I}$ are $\circledast$-independent in $(\mathcal{A},\varphi)$.
Indeed, we can take $\mathcal{A}$ as the free product $C^{\ast}$-algebra generated by the family $(\mathcal{A}_i)_{i \in I}$, see \cite[Lecture 7]{MR2266879}, and then $\varphi$ is uniquely determined on generators by Definition \ref{dfn:boolean_tensor_monotone} and the condition that $\varphi_i(a)= \varphi \circ \Psi_i(a)$ for every $a \in \mathcal{A}_i$ and $i \in I$;
moreover, if every $(\mathcal{A}_i,\varphi_i)$ is tracial, so is $(\mathcal{A},\varphi)$. 
\end{Remark}

%removecomment\newpage 
\subsection{Cumulants}\label{subsection:cumulants}
To give a concrete definition of $\circledast$-cumulants, 
% $\kappa_n^{\circledast}[\mu]$ of a probability measure $\mu \in \mathcal{P}_{\infty}(\R) $, 
we first need to introduce the notions of partition, non-crossing partition, and interval partition. 

Let $[n]$ denote the set $\{1,2,\ldots,n\}$ for each integer $n \geq 1$. 
A \emph{partition} $\pi = \{ B_1, B_2, \ldots, B_r\}$ of $[n]$ is a set of non-empty and pair-wise disjoint subsets of $[n]$ whose union is $[n]$,
i.e., $B \subset [n]$ and $B \neq \emptyset$  for every $B \in \pi$, $B\cap B' \neq \emptyset$ implies $B=B'$ for all $B,B'\in \pi$, and $\cup_{B \in \pi } B = [n]$. 
The elements of a partition are referred to as \emph{blocks}. 
The set of all partitions of $[n]$ is denoted by $P(n)$.
%
%
%
%
%

%and total number of blocks in a partition $\pi$ is denoted by $\#(\pi)$. 
%
Given two distinct blocks  $B=\{ b_1 < b_2 < \cdots < b_r\}$ and $C=\{ c_1 < c_2 < \cdots < c_s\}$ from a partition $\pi \in P(n)$,  we say $B$ and $C$ \emph{cross each other} if there exist $b_i,b_j \in B$ and $c_k, c_\ell \in C$ such that $b_i < c_k < b_j < c_\ell$.
A partition $\pi \in P(n)$ is called \emph{non-crossing} if there is no pair of distinct blocks  $B,C \in \pi$ that cross each other. 
The set of all non-crossing partitions of $[n]$ is denoted by $NC(n)$. 

A partition $\pi \in P(n)$ is called \emph{interval} if every block of $\pi$ is an interval of consecutive numbers of $[n]$, i.e., if $B \in \pi$, then $B=\{\ell, \ell+1, \ldots , m \}$ for some $\ell,m \in [n]$ with $\ell \leq m$. 
Each interval partition is non-crossing as well. 
The set of all interval partitions of $[n]$ is denoted by $I(n)$. 
%
%
%
%

%\fcomm{Write about various types of partition sets $P(n)$, $NC(n)$, and $I(n)$, as well as the Mobius inversion formulas that determine moment-cumulant and cumulant-moment formulas}

%For instance, the sets $\pi_{1} = \{\{1,3\},\{2,4,5,6\}\}$, $\pi_{2} = \{\{1,3,6\},\{2\},\{4,5\}\}$, and $\pi_{3} = \{\{1,3\}  $ $\{4,6\},\{2,5\}\}$ are all partitions of $\{1,2,3,4,5,6\}$. 
%
\begin{Example}
The sets $\pi_{1} = \{\{1,3\},\{2,4,5,6\}\}$, $\pi_{2} = \{\{1,3,6\},\{2\},\{4,5\}\}$, and $\pi_{3} = \{\{1\}, \{2,3,4\},\{5,6\}\}$ are all partitions of the set $\{1,2,3,4,5,6\}$, however, only $\pi_2$ and $\pi_3$ are non-crossing. 
Additionally, $\pi_3$ is an interval partition but $\pi_2$ is not. 
\end{Example}

\begin{Notation} 
Let $\circledast \in \{ \ast, \boxplus ,\uplus\}$.  
For each integer $n \geq 1$, we let 
	\begin{align}
		P_{\circledast}(n) = 
		\left\{\begin{array}{rl}
			P(n), & \text{if } \circledast = \ast, \\
			NC(n), & \text{if } \circledast = \boxplus, \\
			I(n),	& \text{if } \circledast = \uplus.
	 \end{array} \right.
	\end{align}
\end{Notation}

\begin{Definition}\label{def:cumulants}
	For a probability measure $\mu \in \mathcal{P}_{\infty}(\R) $, its $\circledast$-cumulants $\kappa_n^{\circledast}[\mu]$ are defined recursively through the moment-cumulant formula 
\begin{align}\label{eqn:moments_cumulants}
	m_n[\mu] \ \ = \sum_{\pi \in P_{\circledast}(n)} \kappa^{\circledast}_{\pi} [\mu]  \quad \text{with} \quad 
	\kappa^{\circledast}_{\pi} [\mu] : = \prod_{V \in \pi} \kappa^{\circledast}_{\abs{V}} [\mu] \ .
\end{align} 
\end{Definition}

\begin{Remark}
The set $P_{\circledast}(n)$ becomes a lattice when endowed with the partial order given by refinement. 
Namely, given two partitions $ \pi, \theta \in P_{\circledast}(n)$, we write $\pi \leq \theta$, and say that $\pi$ is a \emph{refinement} of $\theta$, if every block of $\pi$ is contained in some block of $\theta$.
Under this partial order, the partitions $1_n:=\{ \{1,2,\ldots,n\}\}$ and $0_n:=\{\{1\},\{2\},\ldots,\{n\}\}$ are the largest and smallest element, respectively. 
The fact that \eqref{eqn:moments_cumulants} yields a well-defined recursion formula follows from 
\begin{align*}
    m_n[\mu] \ \ =  \ \ \kappa_n[\mu] \ \ + \sum_{\substack{\pi \in P_{\circledast}(n) \\ \pi \neq 1_n }} \kappa^{\circledast}_{\pi}[\mu]
\end{align*}
where the sum in the right hand side depends only on $\kappa_1[\mu],\kappa_2[\mu],\ldots,\kappa_{n-1}[\mu]$. 
Moreover, see \cite[Proposition 10.6]{MR2266879}, there exists a function $\mu:P_{\circledast}(n) \times P_{\circledast}(n) \to \Z$, commonly known as \emph{M\"{o}bius function}, that allows us to invert the moment-cumulant formula \eqref{eqn:moments_cumulants}.
Concretely, the $\circledast$-cumulants can be equivalently defined through the cumulant-moment formula 
\begin{align}\label{eqn:cumulants_moments}
		\kappa^{\circledast}_n[\mu] \ \ = \sum_{\pi \in P_{\circledast}(n)} m_{\pi} [\mu]  \cdot \mu_{\circledast}(\pi,1_n) \ . 
\end{align}
\end{Remark}

The $\circledast$-cumulants are also uniquely determined axiomatically, see \cite[Section 3]{MR2884229}, by the following three properties:  \begin{enumerate}
		\item Additivity
		\begin{align*}
			\kappa^{\circledast}_n[\mu_1 \circledast \cdots \circledast \mu_r ]
			=
			\kappa^{\circledast}_n[\mu_1 ]
			+ \cdots +
			\kappa^{\circledast}_n[\mu_r ] 
		\end{align*}
        
		\item Homogeneity
		\begin{align*}
			\kappa^{\circledast}_n[{}_{_\lambda}\mu] %[\mu_{\lambda}  ]
			=
			\lambda^n \, \kappa^{\circledast}_n[\mu]
		\end{align*}

        \item Polynomiality
        \begin{align*}
			\kappa^{\circledast}_n[\mu ] 
			=
			m_n[\mu]
            +
            q_n (\kappa^{\circledast}_1[\mu], \kappa^{\circledast}_2[\mu], \cdots, \kappa^{\circledast}_{n-1}[\mu])
		\end{align*}
        for a polynomial $q_n$ that depends only on $\circledast$ and $n$

\end{enumerate}

%removecomment \fcomm{3. Finish writing preliminaries on cumulants} 

%removecomment \newpage 

%removecomment \fcomm{4. Finish writing preliminaries on $\infty$-divisibility} 

\subsection{Infinitely divisible distributions}\label{subsection:infinitely_divisible}
%removecomment\ \\ \ \\ 
%removecomment \jcomment{La demostración de la ecuación \eqref{approxcomppoisson} es un poco más fina, requiere de la función de Mobius. \\ \ \\ Things to be included \begin{enumerate} 	\item Law of rare events. Mentioned in the introduction. \end{enumerate}}
%
As with Gaussian distributions, one can justify that \eqref{eqn:def_tau_nu} does give genuine Poisson distributions, in the sense that it characterizes the measures arising from laws of rare events. 
In this case, one considers Equation \eqref{eqn:convergence_in_cumulants} with $ \mu_n  = ( {\textstyle ( 	1-\frac{\lambda}{n} ) \,	\delta_{0} 	+\frac{\lambda}{n} \, \Poiss } )$. 	
Let $\mathcal{M}_{\infty}(\R) $ be the set of finite measures over $\R$ with finite moments of all orders.
%{\color{red}Let $\mathcal{M}_K(\R^*)$ denote the set of finite measures on $\R^*=\R\setminus\{0\}$ with compact support.}
%
%
%
%\jcomment{Revisar la notación. Creo que no se ha usado $\mathcal{M}_K(\R^{*})$ antes.}

\begin{Lemma} Suppose $\Poiss \in \mathcal{M}_{\infty}(\R)$. Then, the sequence
	\begin{align}\label{approxcomppoisson}
    \textstyle
		\left(\left(1-\frac{\Poiss[\R]}{n}\right)\delta_{0}+\frac{1}{n}\Poiss\right)^{\circledast n}
	\end{align}
	converges weakly as $n \to \infty$ towards a limiting distribution $\tau_{\Poiss}$ characterized by the relation 
	\begin{align*}
		\kappa_\ell^{\circledast}[\tau_{\Poiss}]
		&=m_\ell[\Poiss].
	\end{align*}
	The distribution $\tau_{\Poiss}$ is referred to as the \emph{compound Poisson distribution} with Levy measure $\Poiss$.
\end{Lemma}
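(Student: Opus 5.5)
Writing $\mu_n := \left(1-\frac{\Poiss[\R]}{n}\right)\delta_0+\frac{1}{n}\Poiss$ (a probability measure once $n\geq \Poiss[\R]$), the plan is to compute the cumulants of $\mu_n^{\circledast n}$ exactly. From them we read off convergence of all moments. We then identify the limiting moment sequence as that of a probability measure which is moment-determinate, and conclude weak convergence by the method of moments.

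\textbf{Step 1 (cumulants of $\mu_n$).} Every moment of $\mu_n$ is explicit: $m_k[\mu_n] = \frac{1}{n} m_k[\Poiss]$ for $k\geq 1$. Invert the moment--cumulant formula \eqref{eqn:moments_cumulants} via \eqref{eqn:cumulants_moments}:
\begin{align*}
\kappa^{\circledast}_\ell[\mu_n] = \sum_{\pi\in P_{\circledast}(\ell)} \mu_{\circledast}(\pi,1_\ell)\prod_{V\in\pi} \tfrac{1}{n} m_{|V|}[\Poiss].
\end{align*}
The only partition contributing at order $n^{-1}$ is $\pi=1_\ell$, whose M\"obius value is $1$. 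Every other partition has at least two blocks and contributes $O(n^{-2})$. Hence
\begin{align*}
\kappa^{\circledast}_\ell[\mu_n]=\tfrac1n m_\ell[\Poiss]+O(n^{-2}),
\end{align*}
with the implied constant depending only on $\ell$, $\circledast$ and the moments of $\Poiss$ up to order $\ell$.

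\textbf{Step 2 (cumulants and moments of the power).} By the additivity \eqref{eqn:linearization_cumulants}, which is valid here since $\circledast$ is classical, free or Boolean (and even in the monotone case, because the factors are identical),
\begin{align*}
\kappa^{\circledast}_\ell[\mu_n^{\circledast n}] = n\,\kappa^{\circledast}_\ell[\mu_n] \to m_\ell[\Poiss].
\end{align*}
Moments are polynomials in finitely many cumulants by \eqref{eqn:moments_cumulants}. Therefore
\begin{align*}
m_k[\mu_n^{\circledast n}] \to M_k := \sum_{\pi\in P_{\circledast}(k)}\prod_{V\in\pi} m_{|V|}[\Poiss].
\end{align*}

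\textbf{Step 3 (existence of the limit).} The sequence $\mu_n^{\circledast n}$ has bounded second moments, so it is tight. Take any weak subsequential limit $\tau$. Uniform integrability follows from the convergence of every even moment, so $\tau$ has moments exactly $M_k$. Its $\circledast$-cumulants are then $m_\ell[\Poiss]$, because cumulants are determined by moments.

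\textbf{Step 4 (uniqueness of the limit).} This is the main obstacle. If $\tau$ is determined by its moments, every subsequential limit coincides with it and the full sequence converges weakly. For compactly supported $\Poiss$, say $|m_\ell[\Poiss]|\leq \Poiss[\R]R^\ell$, the number of partitions gives
\begin{align*}
|M_k| \leq B_k \max(1,\Poiss[\R])^k R^k,
\end{align*}
where $B_k$ is the Bell number, bounded by $k!$. Carleman's condition $\sum_k M_{2k}^{-1/(2k)}=\infty$ then holds, so $\tau$ is moment-determinate.

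For a general $\Poiss\in\mathcal{M}_\infty(\R)$, moment-determinacy can fail. Here I would instead argue analytically. In the free and Boolean cases, the Cauchy/$F$-transforms of $\mu_n^{\circledast n}$ satisfy
\begin{align*}
n\,\bigl(F_{\mu_n}(z)-z\bigr) \to \int \frac{zt}{z-t}\,\Poiss(dt)\cdot(\text{appropriate form}),
\end{align*}
locally uniformly; the Bercovici--Pata framework behind Theorem \ref{thm:ID_as_limit_arrays} then gives weak convergence to a unique $\circledast$-infinitely divisible limit. In the classical case, the characteristic functions converge:
\begin{align*}
\bigl(1+\tfrac1n\textstyle\int(e^{i\theta t}-1)\Poiss(dt)\bigr)^n \to \exp\Bigl(\textstyle\int(e^{i\theta t}-1)\Poiss(dt)\Bigr).
\end{align*}
Combined with Step 3, this identifies the unique limit's cumulants as $m_\ell[\Poiss]$.
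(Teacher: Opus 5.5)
Your Steps 1--2 coincide with the paper's proof. Additivity gives $\kappa_\ell[\mu_n^{\circledast n}]=n\,\kappa_\ell[\mu_n]$. The cumulant--moment formula with $m_k[\mu_n]=\frac1n m_k[\eta]$ produces the weight $n^{-\#(\pi)}$, and after multiplying by $n$ only $\pi=1_\ell$ survives.

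The difference lies entirely in how you pass from convergence of cumulants to weak convergence. The paper does this in one sentence, appealing to convergence in moments for measures characterized by their moments. It never checks that the limiting sequence $M_k$ is the moment sequence of a probability measure, nor that this measure is moment-determinate. Your Step 3 (tightness plus uniform integrability, so every subsequential limit has moments $M_k$ and hence cumulants $m_\ell[\eta]$) supplies the first point. Your Step 4 supplies the second: Carleman's condition for compactly supported $\eta$, and a switch to transforms for general $\eta\in\mathcal{M}_\infty(\R)$. In that general case determinacy may indeed fail, as you note, so ``characterized by'' can only mean ``has these cumulants''. Your route is longer but complete for all $\eta\in\mathcal{M}_\infty(\R)$. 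The paper's argument is only airtight in the determinate case, e.g.\ compactly supported $\eta$.

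One cosmetic point in the analytic step: the sign and the transform are off. With $h(z)=\int\frac{t}{z-t}\,\eta(dt)$ one gets $n(F_{\mu_n}(z)-z)=-zh(z)/(1+h(z)/n)\to-\int\frac{zt}{z-t}\,\eta(dt)$. The expression without the minus sign is the limit of $n\phi_{\mu_n}$, the Voiculescu transform, which is the relevant object for $\boxplus$. For $\uplus$ you use $F_{\mu_n^{\uplus n}}(z)=z+n(F_{\mu_n}(z)-z)$ directly.

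It is simplest to quote the Bercovici--Pata equivalence outright. Here $n\frac{x^2}{1+x^2}\mu_n(dx)=\frac{x^2}{1+x^2}\eta(dx)$ and $n\int\frac{x}{1+x^2}\mu_n(dx)=\int\frac{x}{1+x^2}\eta(dx)$ do not depend on $n$. So $\mu_n^{\ast n}$, $\mu_n^{\boxplus n}$ and $\mu_n^{\uplus n}$ all converge weakly at once, and your Step 3 identifies the cumulants of each limit.
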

\begin{proof}
For each integer $n \geq 1$, let  
	\begin{align*}
\textstyle
		\mu_n
		 =\left(1-\frac{\Poiss[\R]}{n}\right)\delta_{0}+\frac{1}{n}\Poiss 
\quad \text{and} \quad 
\rho_n = 	\underbrace{\mu_n \circledast \cdots \circledast \mu_n}_{n \text{ times}}.
	\end{align*}
Since $\circledast$-cumulants linearize convolution, we have that $ \kappa^{\circledast}_{\ell} [ \rho_n  ] = n \cdot \kappa^{\circledast}_{\ell} [  \mu_n ] $ for any integer $\ell \geq 0$. 
Thus, since convergence in distribution is equivalent to convergence in moments in the space of measures characterized by its moments, it is enough to show that 
\begin{align}\label{eqn:poisson_cumulant_limit}
m_{\ell} [\Poiss] = \lim_{n \to \infty} \, n \cdot \kappa^{\circledast}_{\ell} [  \mu_n ].  %\lim_{n \to \infty} \, \kappa^{\circledast}_{\ell} [ \rho_n  ]. 
\end{align}
To this end, note that the $\ell$-th moment of $\mu_n$ satisfies 
$m_\ell[\mu_n] =  \frac{1}{n} m_\ell[\Poiss]$. 
So, the cumulant-moment formula \eqref{eqn:cumulants_moments} yields 
\begin{align}\label{eqn:moment-cumulant-poisson}
	\kappa_\ell^{\circledast}[\mu_n] 
\ \ = 
		\sum_{\pi \in P_{\circledast}( \ell )} 
			m_{\pi} [\mu_n]  \cdot 
			\mu_{\circledast}(\pi,1_\ell) 
 \ \ = 
	\sum_{\pi \in P_{\circledast}( \ell )}
			 \frac{1}{n^{\#(\pi)}} \cdot  m_{\pi} [\Poiss]  \cdot \mu_{\circledast}(\pi,1_\ell) .
\end{align}
Consequently, when multiplying the left-hand side of \eqref{eqn:moment-cumulant-poisson} by $n$, and then letting $n$ go to infinity, every term in the sum corresponding to a partition $\pi$ with more than one block will vanish. 
Since $1_{\ell}$ is the only partition in $P_{\circledast}(\ell)$ with one block, $\mu_{\circledast}(1_\ell,1_\ell)=1$, and $m_{1_{\ell}}[\Poiss] = m_{\ell}[\Poiss]$, we obtain that \eqref{eqn:poisson_cumulant_limit} holds.
\end{proof}

\begin{Remark}
The $\circledast$-cumulants of a infinitely divisible distribution of the form $ {g}_{\sigma^2}\circledast\pi_{\Poiss} \circledast \delta_{a}$ are given by 
\begin{align}\label{eq:cumulantsidd}
	\kappa_n^{\circledast}[ {g}_{\sigma^2}\circledast\pi_{\Poiss} \circledast \delta_{a}] 
	= 
\left\{\begin{array}{rl}
		a, &  \text{if } n = 1, \\
		\sigma^2 + m_2[\Poiss], &  \text{if } n = 2, \\
		m_n[\Poiss], &  \text{if } n\geq 3. \\
		\end{array} \right.
\end{align}

\end{Remark}

%removecomment \begin{align}\kappa_p[\mu]&=\Indi{\{p=1\}}b+\Indi{\{p=2\}}\left(\sigma^{2}+m_2[\nu]\right)+\Indi{\{p\geq 3\}}m_p[\nu] \end{align}

%--------------------------------------------------------------------------------------%
%--------------------------------------------------------------------------------------%
%                             SECTION:FREE_WASSERSTEIN                                 %
%--------------------------------------------------------------------------------------%
%--------------------------------------------------------------------------------------%

%removecomment \newpage 

\section{Free Wasserstein distance}\label{section:wasserstein}

The distance on $\mathcal{P}(\R)$ that we consider in this manuscript is free $p$-Wasserstein distance introduced by Biane and Voiculescu in \cite{MR1878316}.
\begin{Definition}
For $\mu_1,\mu_2 \in \mathcal{P}(\R)$, we let $\Gamma_{\!\!\textrm{nc}}(\mu_1,\mu_2)$ denote the set of all tracial $C^{*}$-probability spaces $(\Ac,\varphi)$ containing variables $a_1=a_1^*$ and $a_2=a_2^*$ with analytical distributions $\mu_1$ and $\mu_2$, respectively. 
The \emph{free $p$-Wasserstein distance} between $\mu_1$ and $\mu_2$ is defined as
\begin{align}
	\freeWass{p} (\mu_1,\mu_2) \quad = \inf_{ (\Ac,\varphi) \in \Gamma_{\!\!\textrm{nc}}(\mu_1,\mu_2) } 
			\left\{ \abs{\, a_1-a_2 \,}_p  \, \left| \begin{array}{c}
			a_1=a_1^*,a_2=a_2^* \in \Ac \text{ with } \\  \varphi(a^n_i) = m_n[\mu_i] 
			\text{ for } n \geq 1 \text{ and } i=1,2			
	\end{array} \right.			
			 \right\}	
\end{align}
where $\abs{\, \cdot \,}_p $ is the the $p$-norm in a tracial $C^{\ast}$-probability spaces, see Definition \ref{def:nc_pnorm}. 
\end{Definition}

\begin{Remark}\label{rem:free_Wass_attained}
The free $p$-Wasserstein distance is always attained. 
Concretely, in \cite[Proposition 1.4]{MR1878316}, it is proved that given any measures $\mu_1,\mu_2 \in \mathcal{P}(\R)$ there always exists a tracial  $C^{*}$-probability space $(\Ac,\varphi)$ and self-adjoint random variables $a_1, a_2 \in(\Ac,\varphi) $ with analytical distributions $\mu_1$ and $\mu_2$, respectively, such that 
\begin{align*}
\freeWass{p}(\mu_1,\mu_2) = \abs{a_1 - a_2 }_p.
\end{align*}
\end{Remark}

The free $p$-Wasserstein distance is  bounded above by its classical counterpart.  
Recall that the \emph{classical $p$-Wasserstein distance} between two finite measures $\nu_1,\nu_2 \in \mathcal{M}(\R)$ is defined as 
\begin{align}\label{Wasserstein:frevsclass}
	\classWass{p}( \nu_1, \nu_2 ) \ \, : = \inf_{\gamma \in \Gamma(\nu_1,\nu_2)} 
	\left( \int_{\R^2} \abs{x-y}^p \, d \gamma(x,y) \right)^{1/p}
\end{align}
where $\Gamma( \nu_1, \nu_2)$ is the set of all couplings of $ \nu_1$ and $ \nu_2$. 

\begin{Lemma}\label{lemma:convolution_wasserstein-inequality}
Let $\circledast\in\{\ast,\boxplus,\uplus\}$.  If $\mu_1$, $\mu_2$, $\nu_1$, and $\nu_2$ are probability measures on $\R$, then the free Wasserstein distance $\freeWass{p}$ satisfies 
\begin{align}\label{eqn:convolution_wasserstein-inequality}
    \freeWass{p} ( \mu_1 \circledast \mu_2 , \nu_1 \circledast \nu_2  ) \leq  \freeWass{p} ( \mu_1 , \nu_1  ) +  \freeWass{p} ( \mu_2 ,  \nu_2  )
\end{align}
\end{Lemma}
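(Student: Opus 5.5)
We will prove \eqref{eqn:convolution_wasserstein-inequality} by building a single tracial $C^{\ast}$-probability space that simultaneously carries optimal couplings for the two pairs and makes them $\circledast$-independent. By Remark \ref{rem:free_Wass_attained}, there are tracial $C^{\ast}$-probability spaces $(\Ac_1,\varphi_1)$ and $(\Ac_2,\varphi_2)$ with the following properties. $(\Ac_1,\varphi_1)$ contains self-adjoint $a_1,b_1$ with distributions $\mu_1,\nu_1$ and $\abs{a_1-b_1}_p=\freeWass{p}(\mu_1,\nu_1)$. $(\Ac_2,\varphi_2)$ contains self-adjoint $a_2,b_2$ with distributions $\mu_2,\nu_2$ and $\abs{a_2-b_2}_p=\freeWass{p}(\mu_2,\nu_2)$.

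By Remark \ref{rem:product_space}, there is a tracial $C^{\ast}$-probability space $(\Ac,\varphi)$ with state-preserving injective homomorphisms $\Psi_i:\Ac_i\to\Ac$ whose images are $\circledast$-independent. We identify $a_i,b_i$ with $\Psi_i(a_i),\Psi_i(b_i)$. Their distributions are unchanged, and so are their $p$-norms, since $\Psi_i$ is a $\ast$-homomorphism commuting with continuous functional calculus and $\varphi\circ\Psi_i=\varphi_i$. In particular $\abs{\Psi_i(a_i-b_i)}_p=\abs{a_i-b_i}_p$.

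The key point is that independence is a property of the subalgebras $\Psi_1(\Ac_1)$ and $\Psi_2(\Ac_2)$, not of particular elements. Hence $a_1$ is $\circledast$-independent from $a_2$, and $b_1$ is $\circledast$-independent from $b_2$, inside the same space. By the convolution defined through independent sums, $a_1+a_2$ has distribution $\mu_1\circledast\mu_2$ and $b_1+b_2$ has distribution $\nu_1\circledast\nu_2$.

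The pair $(a_1+a_2,\,b_1+b_2)$ is therefore admissible in the infimum defining $\freeWass{p}(\mu_1\circledast\mu_2,\nu_1\circledast\nu_2)$. Writing $(a_1+a_2)-(b_1+b_2)=(a_1-b_1)+(a_2-b_2)$, it remains to apply the triangle inequality for $\abs{\cdot}_p$:
\[
\abs{x+y}_p\le \abs{x}_p+\abs{y}_p .
\]
This holds because on a tracial $C^{\ast}$-probability space $\abs{\cdot}_p$ is the non-commutative $L^p$ (semi)norm for $p\ge1$. We pass to the GNS representation and the von Neumann algebra generated there, where the trace is normal and the Minkowski inequality for non-commutative $L^p$ spaces is standard. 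This step needs traciality, which is why Remark \ref{rem:product_space} is required to preserve it.

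The main obstacle I expect is the analytic part for unbounded measures. If $\mu_i,\nu_i$ lack compact support, the optimal couplings live among affiliated operators rather than in a $C^{\ast}$-algebra. Two facts must then be checked: that $\circledast$-independence of affiliated algebras still yields the convolutions $\mu_1\circledast\mu_2$ and $\nu_1\circledast\nu_2$ from \cite{MR1254116, MR2541362, MR1165862}, and that the norms behave as above. I would first prove the inequality for compactly supported measures, where everything is bounded. I would then pass to general measures by truncation, using two ingredients: weak continuity of $\circledast$, and lower semicontinuity of $\freeWass{p}$ under weak convergence (a limit of traces in an ultraproduct). If this is too heavy, I would state the lemma under the compact-support or finite-$p$-moment setting that the paper actually uses.
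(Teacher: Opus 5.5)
\textbf{Verdict.} Your argument is sound, up to a caveat the paper shares (discussed below). It follows a genuinely different route from the paper's.

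\textbf{Comparison of routes.} You place \emph{both} optimal couplings in one product space, with $\{a_1,b_1\}$ and $\{a_2,b_2\}$ generating $\circledast$-independent subalgebras. Then $(a_1+a_2,\,b_1+b_2)$ is directly a coupling of $\mu_1\circledast\mu_2$ and $\nu_1\circledast\nu_2$, and you conclude with Minkowski's inequality for $\abs{\cdot}_p$. The paper instead carries only one optimal coupling per space:
\begin{enumerate}
\item it makes $\{x_1,y_1\}$, $\{x_2\}$, $\{y_2\}$ independent;
\item it inserts the intermediate law of $x_2+y_1$ via the triangle inequality for the metric $\freeWass{p}$;
\item it bounds the first leg through the cancellation $(x_1+x_2)-(x_2+y_1)=x_1-y_1$;
\item it builds a second product space, with $(\tilde x_2,\tilde y_2)$ optimally coupled, for the second leg.
\end{enumerate}
Your version needs one construction instead of two and never uses the metric property of $\freeWass{p}$. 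The factors also appear in the same order on both sides, so commutativity of $\circledast$ is never invoked. Its cost is the non-commutative Minkowski inequality, which you justify correctly through the GNS von Neumann algebra with its normal faithful trace. The paper avoids Minkowski inside the product space, but only by invoking the triangle inequality for $\freeWass{p}$. That inequality's proof (Biane--Voiculescu) itself glues couplings in an amalgamated free product and then applies Minkowski.

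\textbf{Shared caveat: traciality.} Both proofs need the product state from Remark~\ref{rem:product_space} to be tracial, so that the constructed pair is admissible in $\Gamma_{\!\!\textrm{nc}}$. This holds for free and tensor products of tracial states, but not for the Boolean product. For $a,a'\in\Ac_1$ and $y\in\Ac_2$, Boolean independence gives $\varphi(a\,y\,a')=\varphi_1(a)\varphi_2(y)\varphi_1(a')$, whereas $\varphi(a'a\,y)=\varphi_1(a'a)\varphi_2(y)$. So for $\circledast=\uplus$, neither your coupling nor the paper's is shown to live in a tracial space. This defect is inherited from the remark, not introduced by your approach.

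\textbf{Unbounded measures.} Your concern is well founded but moot for the lemma as literally stated. $\Gamma_{\!\!\textrm{nc}}(\mu,\nu)$ is empty unless both measures are compactly supported, so otherwise the right-hand side is $+\infty$. If you pursue the truncation extension anyway, weak continuity of $\circledast$ and lower semicontinuity of $\freeWass{p}$ are not enough. You also need $\limsup_n\freeWass{p}(\mu_i^{(n)},\nu_i^{(n)})\le\freeWass{p}(\mu_i,\nu_i)$ for the truncated pairs.
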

\begin{proof}
There is a tracial $C^{\ast}$-probability space $(\Ac_1,\tau_1)$  and self-adjoint random variables $a_1,b_1 \in (\Ac_1,\tau_1)$ with analytical distributions $\mu_1,\nu_1$, respectively, such that 
\begin{align*}
    \abs{a_1 - b_1 }_p = \freeWass{p}(a_1,b_1) = \freeWass{p}(\mu_1,\nu_1) ,  
\end{align*}
see Remark \ref{rem:free_Wass_attained}.
Additionally, see Remark \ref{rmk:representation_of_L_-infinity}, there are tracial $C^{\ast}$-probability spaces $(\Ac_2,\tau_2)$ and $(\Ac_3,\tau_3)$ and self-adjoint random variables $a_2\in (\Ac_2,\tau_2)$ and $b_2 \in (\Ac_3,\tau_3)$ with analytical distributions $\mu_2$ and $\nu_2$, respectively. 
Consequently, see Remark \ref{rem:product_space}, there exists a product $C^{\ast}$-probability space $(\Ac,\tau)$ and self-adjoint random variables $x_1,x_2,y_1,y_2 \in (\Ac,\tau)$ such that $\{ x_1, y_1\}$, $\{ x_2 \}$, and $\{ y_2 \}$ are $\circledast$-independent, $\mu_j$ and  $\nu_j$ are the analytical distributions of $x_j$ and $y_j$, respectively, and 
\begin{align*}
    \abs{x_1 - y_1 }_p = \freeWass{p}(x_1,y_1) = \freeWass{p}(\mu_1,\nu_1) . 
\end{align*}
By $\circledast$-indepedence, $\mu_1 \circledast \mu_2$ and $\nu_1 \circledast \nu_2$ are the analytical distributions of $x_1 + x_2$ and $y_1 + y_2$, respectively. 
Thus, the definition and triangle inequality for the non-commutative Wasserstein distance $ \freeWass{p} $ imply that
\begin{align}\label{eqn:wasserstein-inequality_triangle}
    \freeWass{p} ( \mu_1 \circledast \mu_2 , \nu_1 \circledast \nu_2  ) 
   % =  \freeWass{p} ( x_1 + x_2 , y_1 + y_2  ) 
    \leq   
    \freeWass{p} ( x_1 + x_2 , x_2 + y_1  ) +   \freeWass{p} ( x_2 + y_1 , y_1 + y_2  )  .  
\end{align}
Now, there also exists a (possibly distinct) tracial $C^{\ast}$-probability space $(\tilde{\Ac},\tilde{\tau})$ and self-adjoint random variables $\tilde{x}_1,\tilde{x}_2,\tilde{y}_1,\tilde{y}_2 \in (\tilde{\Ac},\tilde{\tau})$ such that $\{ \tilde{x}_2, \tilde{y}_2\}$, $\{ \tilde{x}_1 \}$, and $\{ \tilde{y}_1 \}$ are $\circledast$-independent, $\mu_j$ and  $\nu_j$ are the analytical distributions of $\tilde{x}_j$ and $\tilde{y}_j$, respectively, and 
\begin{align*}
    \abs{\tilde{x}_2 - \tilde{y}_2 }_p = \freeWass{p}(\tilde{x}_2 , \tilde{y}_2) = \freeWass{p}(\mu_2,\nu_2) . 
\end{align*}
Note that $x_2 + y_1$ and $\tilde{x}_2+\tilde{y}_1$ have the same analytical distribution $\mu_2 \circledast \nu_1$, and similarly $y_1 + y_2$ and $\tilde{y}_1+\tilde{y}_2$ with respect to $\nu_1 \circledast \nu_2$, due to $\circledast$-indepedence.  
Hence, we have that  
\begin{align*}
     \freeWass{p} ( x_2 + y_1 , y_1 + y_2  ) 
     =
     \freeWass{p} ( \tilde{x}_2+\tilde{y}_1, \tilde{y}_1+\tilde{y}_2 ) 
     \leq 
     \abs{ (\tilde{x}_2+\tilde{y}_1) - (\tilde{y}_1+\tilde{y}_2) }_p 
      = \freeWass{p}(\mu_2,\nu_2).  
\end{align*}
Finally, since we also have 
\begin{align*}
    \freeWass{p} ( x_1 + x_2 , x_2 + y_1  ) \leq   \abs{ (x_1+x_2) - (x_2 + y_1) }_p  = \freeWass{p}(\mu_1,\nu_1), 
\end{align*}
the conlusion of the lemma, inequality \eqref{eqn:convolution_wasserstein-inequality}, readily follows from \eqref{eqn:wasserstein-inequality_triangle} and the last two inequalities. 

\end{proof}

\begin{Lemma}\label{lemma:convolution_inequality}
If $\Poiss_1$ and $\Poiss_2$ are finite measures on $\R$ with the same total mass $\Poiss_1[\R]=\Poiss_2[\R]$ and finite first moments $m_1[\Poiss_1]$ and $m_1[\Poiss_2]$, then 
\begin{align}\label{eqn:convolution_inequality}
     \freeWass{1} ( \pi_{\Poiss_1}  , \pi_{\Poiss_2}  )  \leq \classWass{1} ( \Poiss_1 , \Poiss_2 )
     +|m_1[\Poiss_1]-m_1[\Poiss_2]|
\end{align}
with $\freeWass{1}$ and $\classWass{1}$ the free and classical $1$-Wasserstein distance, respectively. 
\end{Lemma}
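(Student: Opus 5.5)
We reduce the statement to a coupling at the level of the Poisson approximants. Set $\lambda = \Poiss_1[\R] = \Poiss_2[\R]$; the case $\lambda = 0$ is trivial, so assume $\lambda > 0$. Since $\pi_{\Poiss_j} = \tau_{\Poiss_j} \circledast \delta_{-m_1[\Poiss_j]}$, Lemma \ref{lemma:convolution_wasserstein-inequality} gives
\begin{align*}
\freeWass{1}(\pi_{\Poiss_1},\pi_{\Poiss_2}) \leq \freeWass{1}(\tau_{\Poiss_1},\tau_{\Poiss_2}) + \freeWass{1}(\delta_{-m_1[\Poiss_1]},\delta_{-m_1[\Poiss_2]}).
\end{align*}
The second term equals $|m_1[\Poiss_1]-m_1[\Poiss_2]|$, since the only realization of a Dirac law is a scalar multiple of the identity. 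It therefore suffices to prove $\freeWass{1}(\tau_{\Poiss_1},\tau_{\Poiss_2}) \leq \classWass{1}(\Poiss_1,\Poiss_2)$.

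\textbf{Reduction to the approximants.} For this we use \eqref{approxcomppoisson}. Let $\mu_n^{(j)} = (1-\lambda/n)\delta_0 + \frac1n \Poiss_j$ for $n > \lambda$. Iterating Lemma \ref{lemma:convolution_wasserstein-inequality} $n$ times gives
\begin{align*}
\freeWass{1}\big((\mu_n^{(1)})^{\circledast n},(\mu_n^{(2)})^{\circledast n}\big) \leq n\, \freeWass{1}(\mu_n^{(1)},\mu_n^{(2)}).
\end{align*}

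\textbf{Bounding the one-step distance.} Next we bound the free distance by the classical one. A classical coupling $\gamma$ on $\R^2$ realizes two commuting self-adjoint variables in the tracial $C^*$-probability space $L^\infty(\R^2,\gamma)$ (or an unbounded affiliated version), so $\freeWass{1}\le \classWass{1}$ on probability measures. Let $\gamma$ be an optimal coupling of $\Poiss_1$ and $\Poiss_2$, which have equal mass. Then
\begin{align*}
(1-\lambda/n)\delta_{(0,0)} + \tfrac1n\gamma
\end{align*}
is a coupling of $\mu_n^{(1)}$ and $\mu_n^{(2)}$ with cost $\frac1n \classWass{1}(\Poiss_1,\Poiss_2)$. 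Hence $n\,\freeWass{1}(\mu_n^{(1)},\mu_n^{(2)}) \leq \classWass{1}(\Poiss_1,\Poiss_2)$ uniformly in $n$.

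\textbf{Passage to the limit (main obstacle).} It remains to let $n\to\infty$, and this is the step I expect to be hardest. By the Lemma on \eqref{approxcomppoisson}, the $n$-fold convolutions converge weakly to $\tau_{\Poiss_j}$; here I would assume, as the paper implicitly does, that this convergence holds beyond moment-determinate $\Poiss_j$, by the general limit theorems for $\circledast$-infinitely divisible laws. What we need is lower semicontinuity of $\freeWass{1}$ under weak convergence:
\begin{align*}
\freeWass{1}(\tau_{\Poiss_1},\tau_{\Poiss_2}) \leq \liminf_n \freeWass{1}\big((\mu_n^{(1)})^{\circledast n},(\mu_n^{(2)})^{\circledast n}\big).
\end{align*}
First I would try the inequality $\freeWass{1}\ge \classWass{1}$ combined with a tightness and compactness argument on tracial laws of the pair $(a_1,a_2)$, as in Biane--Voiculescu \cite{MR1878316}: the optimal pairs for the approximants have uniformly bounded first absolute moments, so their joint noncommutative laws admit a limit point (an ultraproduct of the tracial von Neumann algebras). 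In that limit the marginals are $\tau_{\Poiss_j}$, and $|a_1-a_2|_1$ is lower semicontinuous. A simpler alternative, if available, is a Kantorovich-type dual characterization of $\freeWass{1}$ that is stable under weak limits. Either route yields the claim, and combining it with the drift term proves \eqref{eqn:convolution_inequality}.
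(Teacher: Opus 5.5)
Your proposal is correct and follows the paper's proof essentially step for step: the same approximants $(1-\lambda/n)\delta_0+\frac{1}{n}\Poiss_j$, the same $n$-fold use of Lemma \ref{lemma:convolution_wasserstein-inequality} together with $\freeWass{1}\le\classWass{1}$, the same coupling $(1-\lambda/n)\delta_{(0,0)}+\frac{1}{n}\gamma$, and the same passage to the limit via weak convergence and lower semicontinuity of $\freeWass{1}$ (which the paper simply asserts); the only cosmetic difference is that you split off the drift with the subadditivity lemma rather than with the triangle inequality and translation invariance. For the step you flag, no ultraproduct argument is needed: for self-adjoint $a_j$ with distributions $\mu_j$ in a tracial space, the Lidskii--Mirsky type bound $\varphi(|a_1-a_2|)\ge\int_0^1|F_{\mu_1}^{-1}(t)-F_{\mu_2}^{-1}(t)|\,dt$, combined with $\freeWass{1}\le\classWass{1}$, shows that the two distances coincide on $\Pc(\R)$, so lower semicontinuity reduces to the classical fact.
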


\begin{proof}
For any integers $1\leq i \leq n$, let us consider $ \rho_{n,i} =\rho_{n} $ and $\gamma_{n,i} = \gamma_{n}$ where 
\begin{align*}
\rho_{n} = \left(1-\frac{\Poiss_1[\R]}{n}\right)\delta_{0}+\frac{1}{n}\Poiss_1	
\qquad \text{and} \qquad 
\gamma_{n} = \left(1-\frac{\Poiss_2[\R]}{n}\right)\delta_{0}+\frac{1}{n}\Poiss_2	. 
\end{align*}
Thus, from Lemma \ref{lemma:convolution_wasserstein-inequality} and the fact that the free Wasserstein distance is a refinement of the classical one, we have that 
\begin{align*}
\freeWass{1}\left( \rho_{n,1} \circledast \cdots \circledast \rho_{n,n},  \gamma_{n,1} \circledast \cdots \circledast \gamma_{n,n} \right)
\leq 
n \cdot \freeWass{1}\left( \rho_{n} , \gamma_{n} \right)
\leq 
n \cdot \classWass{1}\left( \rho_{n} , \gamma_{n} \right). 
\end{align*}
Since $\Poiss_1[\R]=\Poiss_{2}[\R]$, any tensor transport plan $\vartheta$ from $\mathbb{\Pi}(\Poiss_1,\Poiss_2)$ gives rise to a tensor transport $\tilde{\vartheta}$ belonging to $\mathbb{\Pi}[\rho_{n},\gamma_{n}]$ given by 
\begin{align*}
\tilde{\vartheta}
= 
\left(1-\frac{\Poiss_1[\R]}{n}\right)\delta_{(0,0)}
+\frac{1}{n}\vartheta,
\end{align*}
which satisfies 
\begin{align*}
 \classWass{1}\left( \rho_{n} , \gamma_{n} \right)
  &\leq  \int_{\R^{2}}|x-y|	\tilde{\vartheta}(dx,dy) 
  = \frac{1}{n} \int_{\R^{2}}|x-y|{\vartheta}(dx,dy) .
\end{align*}
In particular, by taking $\vartheta \in \mathbb{\Pi}(\Poiss_1,\Poiss_2)$ in such a way that 
$ %\begin{align*}
\int_{\R^{2}}|x-y|	\vartheta(dx,dy) = \classWass{1} ( \Poiss_1 , \Poiss_2 ) 
%\end{align*} 
$, we obtain 
\begin{align*}
\freeWass{1}\left( \rho_{n,1} \circledast \cdots \circledast \rho_{n,n},  \gamma_{n,1} \circledast \cdots \circledast \gamma_{n,n} \right)
\leq 
\classWass{1} ( \Poiss_1 , \Poiss_2 ) . 
\end{align*}
Since $\rho_n^{\circledast n}$ and $\gamma_n^{\circledast n}$ converge weakly to $\tau_{\Poiss_1}$ and $\tau_{\Poiss_2}$, respectively, the lower semicontinuity of $\freeWass{1}$ together with the preceding estimate yields
\begin{align}\label{eq:prevtogetapch}
\freeWass{1}(\tau_{\Poiss_1},\tau_{\Poiss_2})
\leq
\classWass{1}(\Poiss_1,\Poiss_2).
\end{align}
Now observe that the centered laws $\pi_{\Poiss_1}$ and $\pi_{\Poiss_2}$ are obtained from $\tau_{\Poiss_1}$ and $\tau_{\Poiss_2}$ by translations of sizes $m_1[\Poiss_1]$ and $m_1[\Poiss_2]$, respectively. The triangle inequality and the translation invariance of $\freeWass{1}$ then yields
\begin{align*}
\freeWass{1}(\pi_{\Poiss_1},\pi_{\Poiss_2})
&\leq
\freeWass{1}(\tau_{\Poiss_1},\tau_{\Poiss_2})
+
|m_1[\Poiss_1]-m_1[\Poiss_2]|.
\end{align*}
Combining this estimate with \eqref{eq:prevtogetapch}, we obtain \eqref{eqn:convolution_inequality}.
\end{proof}

%--------------------------------------------------------------------------------------%
%--------------------------------------------------------------------------------------%
%                             SECTION:MAIN_THEOREM                                     %
%--------------------------------------------------------------------------------------%
%--------------------------------------------------------------------------------------%

% \newpage 
 
\section{Proof of Theorem \ref{thm:main_theorm}}\label{section:main_theorem}

The proof of our main result is divided in seven steps, 
comparing $\mu_0 = g_{\sigma^2}  \circledast \pi_{\Poiss} \circledast \delta_a $ and $\mu_7:= g_{\zeta^2}  \circledast\pi_{\Delta} \circledast \delta_b$ through six intermediate approximations $\mu_1,\mu_2,\ldots,\mu_6$ and a repeated use of the triangle inequality. 
For arbitrary $\mu_1,\mu_2,\ldots,\mu_6$, it holds that 
\[
\freeWass{1}(\mu_0,\mu_7)
\leq 
\freeWass{1}(\mu_0,\mu_1) + \freeWass{1}(\mu_1,\mu_2) + \cdots + \freeWass{1}(\mu_6,\mu_7). 
\]
The contribution of the $k$-th step relies on finding an appropriate choice of $\mu_k$ and determining an upper bound for $$\freeWass{1}(\mu_{k-1},\mu_{k}).$$

Throughout the proof of Theorem \ref{thm:main_theorm}, the convolution $\circledast\in\{\ast,\boxplus,\uplus\}$ is fixed. For notational simplicity, we suppress the superscript $\circledast$ in the cumulants and write $\kappa_n[\rho]$ instead of $\kappa_n^{\circledast}[\rho]$. 
\begin{Notation}
From now on, for $\mu = g_{\sigma^2}  \circledast \pi_{\Poiss} \circledast \delta_a$ and $\nu = g_{\zeta^2}  \circledast\pi_{\Delta} \circledast \delta_b$  as in Theorem \ref{thm:main_theorm}, we take 
\begin{align}
\mathcal{E}_{\nu}[\mu] : = \sum_{n=1}^{2m+4}  \abs{ \kappa_n[\mu] - \kappa_n[\nu]}  . 
\end{align}
Additionally, we consider  
\begin{equation}\label{def:U_Delta_L_Delta}
U_\Delta =  \max \{ 1, \abs{x_1} , \ldots, \abs{x_m} \}, 
\end{equation}
and setting $x_0 =0$, we let 
\begin{equation}\label{def:D_Delta_d_Delta}
D_\Delta =  \max \{ 1, \abs{x_i-x_j} : 0 \leq i < j \leq m \}  \text{\quad and \quad}  3 r_\Delta = d_\Delta =  \min \{1, \abs{x_i -x_j} : 0 \leq i < j \leq m \}  .  
\end{equation}
It is immediate that $d_\Delta \leq L_\Delta \leq U_\Delta \leq D_\Delta \leq 2 U_\Delta$.
\end{Notation}

\begin{Remark}
By an approximation argument, it suffices to show the result in the case where $\Poiss\in\mathcal{M}^{\infty}(\R^{*})$, the set of finite measures over $\R^* = \R \setminus \{0\}$ with finite moments of all orders. 
Moreover,  without loss of generality, we can assume that $\Poiss[I]>0$ for every non-empty open interval $I\subset \R$. 
Indeed, if the latter condition does not hold, we simply replace $\mu=g_{\sigma^2}  \circledast \pi_{\Poiss} \circledast \delta_a$ with the infinitely divisible distribution $\mu_{\varepsilon} = g_{\sigma^2}  \circledast \pi_{\Poiss_{\varepsilon}} \circledast \delta_a$ where $\Poiss_{\varepsilon}=\Poiss(dx)+\varepsilon \exp\{-x^{2}\}dx$. 
Then, if \eqref{eqn:main_cumulant_inequality} holds for $\mu_{\varepsilon}$, and we let $\varepsilon\rightarrow0$, we obtain that it also holds for $\mu$ by continuity. 
%
%Consequently,  without loss of generality, we can assume that $\Poiss[I]>0$ for every non-empty open interval $I$. 
\end{Remark}

Before proceeding with the first step, we need the following lemma that establises a relation between the cumulant expression in \eqref{eqn:main_cumulant_inequality} from  Theorem \ref{thm:main_theorm} and the integral of a certain non-negative polynomial with respect to \( \eta \).
%

%\textcolor{purple}{\[U_\Delta =  \max \{ 1, \abs{x_1} , \ldots, \abs{x_m} \}  \text{\quad and \quad}L_\Delta =  \min \{1, \abs{x_1} , \ldots, \abs{x_m} \} \]} \textcolor{purple}{For the following, we take $x_0 =0$.} \textcolor{purple}{\[D_\Delta =  \max \{ 1, \abs{x_i-x_j} : 0 \leq i < j \leq m \}  \text{\quad and \quad}d_\Delta =  \min \{1, \abs{x_i -x_j} : 0 \leq i < j \leq m \} \] Note that \[d_\Delta \leq L_\Delta \leq 1 \leq U_\Delta \leq D_\Delta \]}

\begin{Lemma}\label{lemma:polynomial_cumulant_bound}
Let $\mu = g_{\sigma^2}  \circledast \pi_{\Poiss} \circledast \delta_a$ and $\nu = g_{\zeta^2}  \circledast \pi_{\Delta} \circledast \delta_b$ be infinitely divisible distributions as in Theorem \ref{thm:main_theorm} with triples $(a,\sigma^2,\Poiss)$ and $(b,\zeta^2,\Delta)$, respectively. 
Then
\begin{align}\label{eqn:polynomial_cumulant_inequality}
(x_1 \cdots x_m)^2 (\sigma^2 - \zeta^2)
+
\int_{\R} x^2 (x-x_1)^2  \cdots (x-x_m)^2 \Poiss (dx)
\leq  
	(2 U_{\Delta})^{2m} \, \sum_{n=2}^{2m+2} \abs{ \kappa_n[\mu] - \kappa_n[\nu] },
\end{align}
where $U_\Delta =  \max \{ 1, \abs{x_1} , \ldots, \abs{x_m} \}$.
\end{Lemma}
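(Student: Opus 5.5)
We use the cumulant formula \eqref{eq:cumulantsidd} for both $\mu$ and $\nu$, and then pair the cumulant differences against the coefficients of a suitable polynomial.

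Write
\begin{align*}
Q(x) = x^2 (x-x_1)^2 \cdots (x-x_m)^2 = \sum_{n=2}^{2m+2} c_n x^n,
\end{align*}
a polynomial of degree $2m+2$. Because of the factor $x^2$, it has no constant or linear term. Its coefficient of $x^2$ is $c_2 = (x_1\cdots x_m)^2$.

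\textbf{Step 1: the key identity.} By \eqref{eq:cumulantsidd}, for $n\geq 3$ we have $\kappa_n[\mu]-\kappa_n[\nu] = m_n[\Poiss]-m_n[\Delta]$. For $n=2$ we have
\begin{align*}
\kappa_2[\mu]-\kappa_2[\nu] = (\sigma^2-\zeta^2) + m_2[\Poiss]-m_2[\Delta].
\end{align*}
Since $\Poiss$ has finite moments up to order $2m+2$ (by the moment assumption on $\mu$, or by the approximation remark), we get
\begin{align*}
\sum_{n=2}^{2m+2} c_n\bigl(\kappa_n[\mu]-\kappa_n[\nu]\bigr) = c_2(\sigma^2-\zeta^2) + \int_\R Q\,d\Poiss - \int_\R Q\,d\Delta.
\end{align*}
The last integral vanishes: $\Delta$ is supported on $\{x_1,\dots,x_m\}$, and $Q$ vanishes at each of these points. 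Hence the left-hand side of \eqref{eqn:polynomial_cumulant_inequality} equals exactly $\sum_{n=2}^{2m+2} c_n(\kappa_n[\mu]-\kappa_n[\nu])$.

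\textbf{Step 2: bounding the coefficients.} The sum above is at most $\max_n|c_n|\cdot\sum_{n=2}^{2m+2}|\kappa_n[\mu]-\kappa_n[\nu]|$, so it suffices to show $|c_n|\leq (2U_\Delta)^{2m}$. The coefficients of $Q$ are those of $P(x)^2$, where $P(x)=(x-x_1)\cdots(x-x_m)$, shifted by two degrees.

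By Vieta, the coefficient of $x^{m-k}$ in $P$ is, up to sign, the elementary symmetric polynomial $e_k(x_1,\dots,x_m)$. Since each $|x_i|\leq U_\Delta$ and $U_\Delta\geq 1$, we get $|e_k|\leq \binom{m}{k}U_\Delta^{k}\leq \binom{m}{k}U_\Delta^m$.

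Each coefficient of $P^2$ is a convolution sum $\sum_{j}a_j a_{i-j}$ of coefficients of $P$. It is therefore bounded by
\begin{align*}
U_\Delta^{2m}\sum_j \binom{m}{j}\binom{m}{i-j} = U_\Delta^{2m}\binom{2m}{i} \leq U_\Delta^{2m}\,2^{2m} = (2U_\Delta)^{2m},
\end{align*}
where the middle equality is Vandermonde's identity. This gives $|c_n|\leq (2U_\Delta)^{2m}$ for all $n$.

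\textbf{Main obstacle.} Step 1 is the delicate point. One must check that the $n=2$ cumulant carries the Gaussian variance difference with coefficient exactly $c_2=(x_1\cdots x_m)^2$, and that the drift cumulant $n=1$ does not enter. The factor $x^2$ in $Q$ guarantees both: it removes the $n=0$ and $n=1$ terms, and it makes the $x^2$ coefficient equal to $P(0)^2=(x_1\cdots x_m)^2$. The only other care needed is integrability of $Q$ against $\Poiss$, which the moment assumption provides.
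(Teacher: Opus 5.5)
Your proposal is correct and is essentially the paper's argument. You expand $x^2(x-x_1)^2\cdots(x-x_m)^2$, use that it vanishes on the support of $\Delta$ to turn $\int q\,d\eta$ into a combination of moment differences, convert these to cumulant differences via \eqref{eq:cumulantsidd} (with the $\sigma^2-\zeta^2$ term coming from $n=2$), and bound the coefficients by $(2U_\Delta)^{2m}$. Your Vieta/Vandermonde bound $\binom{2m}{i}U_\Delta^{2m}$ is the same count as the paper's expansion over multi-indices in $\{0,1\}^{2m}$, only organized through $P(x)^2$.
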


\begin{proof}
Take $q(x) = x^2 (x-x_1)^2 \cdots (x-x_m)^2$ and write 
\[
%x^2 (x-x_1)^2  \cdots (x-x_m)^2 
q(x) = (x_1 \cdots x_m)^2 x^2 + \sum_{n=3}^{2m+2} \alpha_{n} x^{n}
\text{\quad with \quad} 
{ \textstyle 
\alpha_{n} \, =\sum_{ \mathbb{i} } \left(  \prod_{\ell=1}^{m}(-x_{\ell})^{i_{2\ell-1}+i_{2\ell}} \right) }
\]
where the latter sum runs over all the multi-indices $\mathbb{i}=(i_1,i_2,\dots, i_{2m}) \in \{0,1\}^{2m}$ satisfying \[ i_1+ i_2 + \cdots+i_{2m}=2m+2-n .\]
Since the measure $\Delta$ is supported on the set $\{x_1, x_2, \ldots, x_m \}$, and $q(x_k)=0$ for $1\leq k\leq m$, we obtain 
\begin{align*}
\int_{\R}  q (x) \, \Poiss (dx) 
  &= \int_{\R}  q (x) \, (\Poiss-\Delta)(dx)
%\\ &
=(x_1 \cdots x_m)^2 \left(m_2[\Poiss] - m_2[\Delta]  \right)  +  \sum_{n=3}^{2m+2}  \alpha_n \cdot \left(m_n[\Poiss] - m_n[\Delta]  \right) .
  % \int_{\R}  p(x) (\Poiss-\Delta) (dx) .
\end{align*}
Now, from \eqref{eq:cumulantsidd}, we know that $m_2[\Poiss] =\kappa_2(\mu)-\sigma^2$,  $m_2[\Delta] =\kappa_2(\nu)-\zeta^2$, and $\kappa_n[\mu] = m_n[\Poiss]$  and $\kappa_n[\nu] = m_n[\Delta]$ for $n \geq 3$. 
Thus, replacing moments by cumulants, we obtain 
\[
\int_{\R} q(x) \, \Poiss (dx)  \leq  
(x_1 \cdots x_m)^2 (\kappa_2[\mu]-\sigma^{2} - \kappa_2[\nu]+\zeta^2) + 
\sum_{n=3}^{2m+2} \abs{\alpha_n} \abs{ \kappa_n[\mu] - \kappa_n[\nu] } , 
\]
and consequently 
\begin{align}\label{eqn:polynomial_cumulant_inequalityprev}
\int_{\R} x^2 (x-x_1)^2  \cdots (x-x_m)^2 \Poiss (dx) %\leq C \mathcal{E}_{\Delta} (\mu) = 
\leq C \sum_{n=2}^{2m+2} \abs{ \kappa_n[\mu] - \kappa_n[\nu] }+(x_1 \cdots x_m)^2 \left( \zeta^2 -\sigma^{2}   \right)
\end{align}
with $C = \max\{ (x_1 \cdots x_m)^2, \abs{\alpha_3}, \ldots , \abs{\alpha_{2m+2}} \}$. 
Finally, we know that 
\[ 
C \leq (2U_\Delta)^{2m}. 
%\text{\quad where \quad} U_\Delta =  \max \{ 1, \abs{x_1} , \ldots, \abs{x_m} \}
\]
since we have the inequalities 
\[
(x_1\cdots x_m)^{2}\leq U_\Delta^{2m} 
\text{\quad and \quad} {\textstyle
\abs{\prod_{\ell=1}^{m}(-x_{\ell})^{i_{2\ell-1}+i_{2\ell}} } 
\leq U_\Delta^{2m}}
\text{\quad for any \quad} (i_1,\dots, i_{2m}) \in \{0,1\}^{2m}.
\]
%for any $(i_1,\dots, i_{2m}) \in \{0,1\}^{2m}$
%
%
%
\end{proof}

%removecomment \fcomm{5. Review statement of Theorem \ref{thm:main_theorm} and Definition \ref{def:compound_poisson} to take into account the following paragraph.}

%\end{comment}

%\newpage 
%-----------------------------------------------------%
%-------------------- Step I -------------------------%
%-----------------------------------------------------%

%\jcomment{A ver si podemos juntarnos y revisar esta sección con cuidado. Me acabo de dar cuenta que cambié la notación original, más las distintas modificaciones que sufrió el resutlado orignal. Por ejemplo, creo que antes era la distancia entre $\mu$ y $\Poiss$ y ahora está como $\mu_1$ y $\mu_2$ en la introducción. Igual voy checando lo que pueda yo sólo con cuidado} 

\noindent \textbf{Step I.} %\\ 
Define  $\mu_0 := g_{\sigma^2} \circledast \pi_{\Poiss} \circledast \delta_a$ and $\mu_1 := g_{\zeta^2} \circledast\pi_{\Poiss} \circledast \delta_a$.  Due to Lemma \ref{lemma:convolution_wasserstein-inequality}, we obtain  
\begin{align*}
  \freeWass{1} ( \mu_0, \mu_1 ) 
  \leq  \freeWass{1}( g_{\sigma^2} , g_{\zeta^2}). 
\end{align*}
For a bound of the right-hand side, since $\sigma^2\geq\zeta^2$, we first consider two $\circledast$-independent centered $\circledast$-Gaussian variables $X=X^*$ and $Y=Y^*$ with variances $\zeta^2$ and $\sigma^2-\zeta^2$, respectively, defined in a $C^*$-probability space $(\mathcal{A},\varphi)$. 
By \eqref{eqn:linearization_cumulants} and \eqref{eqn:gaussian_in_cumulants}, the variable $X+Y$ is centered and $\circledast$-Gaussian with variance $\sigma^2$, so $(X,X+Y)$ is a non-commutative coupling for the pair of measures $( g_{\zeta^2},g_{\sigma^2})$. 
Hence, from the definition of $\freeWass{1}$, it follows that 
\begin{align*}
\freeWass{1}( g_{\zeta^2},g_{\sigma^2})
  &\leq 
  \varphi(\abs{Y})
  %\leq \varphi[Y^2]^{1/2}=\sqrt{\sigma^2-\zeta^2}.
\end{align*}
where $\abs{Y}$ denotes the positive element in the $C^*$-algebra generated by $Y\in \mathcal{A}$ such that $\abs{Y}^2 = YY^* = Y^2$. 
The Cauchy–Schwarz inequality in $(\mathcal{A},\varphi)$ then gives 
\begin{align*}
    \varphi(\abs{Y} \cdot 1_{\mathcal{A}})^2 \leq \varphi(\abs{Y}^2) \cdot \varphi(1_{\mathcal{A}}) = \sigma^2 - \zeta^2, 
\quad \text{and consequently} \quad 
\freeWass{1}( g_{\zeta^2},g_{\sigma^2}) \leq \sqrt{\sigma^2-\zeta^2}.
\end{align*}
Let us now get a bound for $\sqrt{\sigma^2-\zeta^2}$. 
For this, take $q(x) = x^2 (x-x_1)^2 \cdots (x-x_m)^2$ and $p(x) = q(x) - (x_1 \cdots x_m)^2 x^2$. 
%\[ x^2 (x-x_1)^2  \cdots (x-x_m)^2 q(x) - (x_1 \cdots x_m)^2 x^2 + p(x). \]
%
Then, by Lemma \ref{lemma:polynomial_cumulant_bound}, we have that 
\begin{align*}
(x_1 \cdots x_m )^2 (\sigma^{2}-\zeta^{2})
\leq 
        (x_1 \cdots x_m )^2 (\sigma^{2}-\zeta^{2})+ \int_{\R}q(x) \Poiss(dx)
\leq 
             (2U_\Delta)^{2m} \cdot 
            \sum_{n=2}^{2m+2} \abs{ \kappa_n[\mu] - \kappa_n[\nu] }.
\end{align*}
% \begin{align*}
% (x_1 \cdots x_m )^2 (\sigma^{2}-\zeta^{2})
%   &\leq (x_1 \cdots x_m )^2 (\sigma^{2}-\zeta^{2})+(x_1 \cdots x_m )^2 \int_{\R}q(x) \Poiss(dx)\\
%   &\leq (x_1 \cdots x_m)^2(\kappa_2[\mu]-\zeta^{2}-\kappa_2[\tau_{\Delta}])
%   +4^{m}(1+|\bm{x}|)^{2m} \sum_{k=3}^{2m+2} \abs{ \kappa_n(\mu) - \kappa_n(\tau_{\Delta}) }.
% \end{align*}
% Since $\kappa_2[\upsilon]=\zeta^{2}+\kappa_2[\tau_{\Delta}]$, we deduce the inequality
% \begin{align*}
% (x_1 \cdots x_m )^2 (\sigma^{2}-\zeta^{2})
%   &\leq (x_1 \cdots x_m)^2(\kappa_2[\mu]-\kappa_2[\upsilon])
%   +4^{m}(1+|\bm{x}|)^{2m} \sum_{k=3}^{2m+2} \abs{ \kappa_n(\mu) - \kappa_n(\tau_{\Delta}) }.
% \end{align*}
% On the other hand, 
% \begin{align*}
% (x_1 \cdots x_m )^2 (\zeta^2 -\sigma^{2})
%   &\leq 4^{m}(1+|\bm{x}|)^{2m} \sum_{k=2}^{2m+2} \abs{ \kappa_n(\mu) - \kappa_n(\tau_{\Delta}) }
% \end{align*}
Since $r_\Delta  = d_\Delta/3 \leq \abs{x_j}$ for any $j$, we have $0 < r^{2m}_\Delta \leq (x_1 \cdots x_m )^2$, and hence 
\begin{align} \label{eqn:wasserstein_triangle_1}
  \freeWass{1} ( \mu_0, \mu_1 )
\leq \sqrt{\sigma^2-\zeta^2}  \leq 
%  \leq \abs{ \sigma}
\left(  \frac{2 \, U^{}_\Delta}{ r_\Delta } \right)^{m} {\color{black}\mathcal{E}_{\nu}[\mu]^{1/2}}
  %\leq   2^{m}(1+R_{\Delta})^{m} \mathcal{E}_{\upsilon}[\mu]^{1/2}.
\end{align}
%

%\textcolor{purple}{\[   \freeWass{1} ( \mu_0, \mu_1 )  \leq \frac{2^m U^{m}_\Delta}{L^{m}_\Delta} \mathcal{E}_{\upsilon}[\mu]^{1/2} = \left(  \frac{ 2 \, U^{}_\Delta}{L^{}_\Delta} \right)^{m} \mathcal{E}_{\upsilon}[\mu]^{1/2}  \leq \left(  \frac{ 2 \, U^{}_\Delta}{ \radio_{\Delta} } \right)^{m} \mathcal{E}_{\upsilon}[\mu]^{1/2}  \] }
%$0\leq\zeta^2\leq \sigma^2$
%\newpage 

%-----------------------------------------------------%
%-------------------- Step II ------------------------%
%-----------------------------------------------------%
\noindent \textbf{Step II.} %\\
Define $\mu_1:= g_{\zeta^{2}} \circledast\pi_{\Poiss} \circledast \delta_a  \circledast \delta_0$ and take $\mu_2:= g_{\zeta^{2}} \circledast\pi_{\Poiss} \circledast \delta_a  \circledast \delta_{b-a}$. 
Due to Lemma \ref{lemma:convolution_wasserstein-inequality} and \eqref{Wasserstein:frevsclass}, we obtain 
\begin{align*}
  \freeWass{1} ( \mu_1, \mu_2 ) 
  \leq  \freeWass{1}( \delta_0 , \delta_{b-a} ) 
  \leq  \classWass{1}( \delta_0 , \delta_{b-a} ) .
\end{align*}
But, we know that $ \classWass{1}( \delta_0 , \delta_{b-a} )\leq \abs{b-a} =  \abs{\kappa_1[\mu]-{\color{black} \kappa_1[\nu]} }$. 
And consequently, 
\begin{align}\label{eqn:wasserstein_triangle_2}
  \freeWass{1} ( \mu_1, \mu_2 ) 
{\color{black} \leq \mathcal{E}_{\nu}[\mu]
\leq \mathcal{E}_{\nu}[\mu]^{1/2}}
\end{align}
where the last inequality follows from the condition 
${\color{black} 0 \leq \mathcal{E}_{\nu}[\mu]\leq 1}$.\\

%-----------------------------------------------------%
%-------------------- Step III -----------------------%
%-----------------------------------------------------%
\noindent \textbf{Step III.} %\\
%
%From this step onward, we choose any $\radio_{\Delta}> 0$ such that  \begin{align*}   \radio  \leq \min_{1 \leq k \leq m } \frac{ \abs{x_k} } {2} . \end{align*} \textcolor{cyan}{We can take any $\radio < \tfrac{d_\Delta}{2}$ }
Take $I = [- \radio_{\Delta} , \radio_{\Delta} ]$. 
Consider the measures $\Poiss_{3}$ and $\tilde{\Poiss}_{3}$ determined by  
\begin{align*}
    \Poiss_{3}(dx):=\Indi{\R \backslash I}(x)\Poiss(dx)
    \quad \text{and} \quad
    \tilde{\Poiss}_{3}(dx):=\Indi{I}(x)\Poiss(dx), 
\end{align*}
respectively. 
Write $\mu_2 = g_{\zeta^{2}}\circledast\pi_{\tilde{\Poiss}_3} \circledast \pi_{\Poiss_3} \circledast \delta_{b} $ and let $\mu_3 = g_{\zeta^{2}}\circledast\delta_{0} \circledast \pi_{\Poiss_3} \circledast \delta_{b}  $. 
Due to Lemma \ref{lemma:convolution_wasserstein-inequality}, we know that 
\begin{align*}
  \freeWass{1} ( \mu_2, \mu_3 ) 
  \leq  \freeWass{1}( \pi_{\tilde{\Poiss}_3} , \delta_0 ) 
  \leq  \classWass{1}( \pi_{\tilde{\Poiss}_3} , \delta_0 ) .
\end{align*}
Since $m_2[ \pi_{\tilde{\Poiss}_3} ] =  \kappa_2[ \pi_{\tilde{\Poiss}_3}] = m_2 [ \tilde{\Poiss}_3 ] $, the Cauchy-Schwartz inequality yields  
\begin{align*}
     \classWass{1}( \pi_{\tilde{\Poiss}_3} , \delta_0 ) 
\leq
    \int_{\R} |x|  \pi_{\tilde{\Poiss}_3} (dx)
\leq     
\pi_{\tilde{\Poiss}_3} [\R]^{1/2} 
    \left(\int_{\R} x^{2} \  \pi_{\tilde{\Poiss}_3} (dx)\right)^{1/2}
=
    \left(\int_{\R} x^{2}   \  \tilde{\Poiss}_3 (dx)\right)^{1/2}.
\end{align*}
Moreover, since $1  \leq \abs{x-x_k} / \radio_{\Delta} $ for any $x \in I$ and $k=1,\ldots,m$, we have that 
\begin{align*}
    \int_{\R} x^{2}  \ \tilde{\Poiss}_3 (dx)
\leq 
   \radio_{\Delta}^{-2m}  \int_{I}  x^{2} (x-x_1)^2 \cdots (x-x_m)^2  \ \Poiss (dx)
\leq 
   \radio_{\Delta}^{-2m}  \int_{\R}  x^{2} (x-x_1)^2 \cdots (x-x_m)^2  \ \Poiss (dx). 
\end{align*}
It then follows from Lemma \ref{lemma:polynomial_cumulant_bound} that 
\begin{align} \label{eqn:wasserstein_triangle_3}
\freeWass{1} ( \mu_2 , \mu_3 )  \leq \left(  \frac{ 2 \, U^{}_\Delta}{ \radio_{\Delta} } \right)^{m}  \mathcal{E}_{\upsilon}[\mu]^{1/2}
\end{align}

%\textcolor{purple}{\[   \freeWass{1} ( \mu_2 , \mu_3 )  \leq \frac{2^m(1+R_{\Delta})^{m}}{ r_{\Delta}^{m} } \mathcal{E}_{\upsilon}[\mu]^{1/2}.  \]}

%\newpage 

%-----------------------------------------------------%
%-------------------- Step IV ------------------------%
%-----------------------------------------------------%

\noindent \textbf{Step IV.} %\\
%
%Since $\Poiss\in\mathcal{M}^{\infty}(\R^{*})$, we have that $\Poiss[ \R \backslash \{0\}] > 0$. 
%
Take $J_k = [x_k-\radio_{\Delta},x_k+\radio_{\Delta}]$ for $k = 1,\ldots,m$ and let  $\Poiss_{4}$ denote the measure given by 
\begin{align*}
	\Poiss_4(dx)=\Indi{J}(x) \, \Poiss(dx)  
 \quad \text{where} \quad 
    J= \bigcup_{k=1}^m J_k.
\end{align*}
Notice that the intervals $J_k$ are mutually disjoint and $J$ is contained in $\R\backslash I$ with $I = [- \radio_{\Delta} , \radio_{\Delta} ]$ since $\radio_{\Delta} < d_{\Delta}/2$. 
Thus, we have  
\[ 
(\Poiss_3 - \Poiss_4) (dx) = \Indi{\R \backslash (I \cup J)}(x) \, \Poiss(dx) .
%\quad \text{and} \quad 1 \leq \abs{x_k - x}/ \radio_{\Delta}  \quad \text{for} \quad x \in \R \backslash (I \cup J)  \quad \text{and} \quad k=1,\ldots,m .
\]
%
%for $x \in \R \backslash (I \cup J) $ and $k=1,\ldots,m$.
%
Moreover, $\Poiss$ charges strictly positive mass to open intervals, %so we have that $\Poiss_4[\R] = \Poiss[J] > 0$ and $(\eta_3 - \eta_4)[\R] =  \eta[R\setminus(I\cup J)] > 0$. 
and consequently $\Poiss_4, \Poiss_3 - \Poiss_4\in\Mca(\R)$. 
Hence, we can write $\mu_3 = g_{\zeta^{2}} \circledast\pi_{\Poiss_3 - \Poiss_4} \circledast \pi_{\Poiss_4} \circledast \delta_{b} $ and take $\mu_4 = g_{\zeta^{2}} \circledast\delta_0  \circledast \pi_{\Poiss_4} \circledast  \delta_{b}$. 
Due to Lemma \ref{lemma:convolution_wasserstein-inequality}, we have that 
\begin{align*}
  \freeWass{1} ( \mu_3, \mu_4 ) 
  \leq  \freeWass{1}( \pi_{\Poiss_3 - \Poiss_4} , \delta_0 ) 
  \leq  \classWass{1}  ( \pi_{\Poiss_3 - \Poiss_4} , \delta_0 )   .
\end{align*}
Now, since $m_2[ \pi_{\Poiss_3 - \Poiss_4} ] =  \kappa_2[ \pi_{\Poiss_3 - \Poiss_4} ] = m_2 [ \Poiss_3 - \Poiss_4 ] $, the Cauchy-Schwartz inequality yields  
\begin{align*}
     \classWass{1}( \pi_{\Poiss_3 - \Poiss_4}, \delta_0 ) 
& \leq
    \int_{\R} |x| \ \pi_{\Poiss_3 - \Poiss_4} (dx)
\leq 
\pi_{\Poiss_3 - \Poiss_4}  [\R]^{1/2} 
    \left(\int_{\R} x^{2} \  \pi_{\Poiss_3 - \Poiss_4} (dx)\right)^{1/2} 
%\\& \leq 
=
    \left(\int_{\R} x^{2}   \  (\Poiss_3 - \Poiss_4) (dx)\right)^{1/2}.
\end{align*}
But, we have $1  \leq \abs{x-x_k} / \radio_{\Delta} $ for any $x \in \R \backslash (I \cup J) $ and $k=1,\ldots,m$, so we obtain  
\begin{align*}
    \int_{\R} x^{2}   \  (\Poiss_3 - \Poiss_4) (dx)
\leq 
    \radio^{-2m}_{\Delta}  \int_{ \R \backslash (I \cup J) }  x^{2} (x-x_1)^2 \cdots (x-x_m)^2  \  \Poiss (dx). 
\end{align*}
As in \eqref{eqn:wasserstein_triangle_3}, it follows from Lemma \ref{lemma:polynomial_cumulant_bound} that 
\begin{align} \label{eqn:wasserstein_triangle_4}
  \freeWass{1} ( \mu_3 , \mu_4 )  \leq \left(  \frac{ 2 \, U^{}_\Delta}{ \radio_{\Delta} } \right)^{m}  {\color{black} \mathcal{E}_{\nu}[\mu]^{1/2}} 
\end{align}
%

%\textcolor{purple}{\[   \freeWass{1} ( \mu_3 , \mu_4 )   \leq \frac{2^m(1+R_{\Delta})^{m}}{ r_{\Delta}^{m} } \mathcal{E}_{\upsilon}[\mu]^{1/2}.  \]}

%\newpage 
%-----------------------------------------------------%
%-------------------- Step V -------------------------%
%-----------------------------------------------------%

\noindent \textbf{Step V.} %\\
As in the previous step, take $J_k = [x_k-\radio_{\Delta},x_k+\radio_{\Delta}]$ for $k = 1,\ldots,m$, 
and consider now the discrete measure $\Poiss_5$ given by 
\begin{align*}
\Poiss_5
  & =\sum_{k=1}^{m}\Poiss_4[J_k]\delta_{x_k}. 
\end{align*}
Since the intervals $J_k$ are disjoint, it holds that $\Poiss_4[\R] = \Poiss_5[\R]$.
Write $\mu_4 =  g_{\zeta^{2}} \circledast\pi_{\Poiss_4} \circledast \delta_{b} $ and take $\mu_5 = g_{\zeta^{2}} \circledast\pi_{\Poiss_5} \circledast \delta_{b}$. 
Due to Lemmas \ref{lemma:convolution_wasserstein-inequality} and \ref{lemma:convolution_inequality}, we obtain that
\begin{align*}
  \freeWass{1} ( \mu_4, \mu_5 ) 
  \leq  \freeWass{1}( \pi_{\Poiss_4}  , \pi_{\Poiss_5}  ) 
  \leq  \classWass{1}  ( \Poiss_4 , \Poiss_5 )
  {\color{black} +
  |m_1[\Poiss_4]-m_1[\Poiss_5]|}.
\end{align*}
To obtain a bound for the first term in the rightmost expression, we consider a particular tensor transport plan between $\Poiss_4$ and $\Poiss_5$ constructed as follows.  
Since the intervals $J_k$ are disjoint, the mapping $T:\R\rightarrow\R$ given by 
\begin{align*}
T(x) = \left\{ \begin{array}{cc}
    x_k, & \text{if } x \in J_k,  \\
     0, & \text{otherwise,} 
\end{array}
\right.
\end{align*}
is well-defined. 
Note that $\Poiss_5$ coincides with the push-forward of $\Poiss_4$ under the mapping $T$. %, $B \mapsto \Poiss_4 (T^{-1}(B))$. 
Consequently, the push-forward of $\Poiss_4$ under the mapping  $x\mapsto(x,T(x))$ is a tensor transport plan between $\Poiss_4$ and $\Poiss_5$, yielding 
\begin{align*}
\classWass{1}  ( \Poiss_4 , \Poiss_5 ) 
   &\leq \int_{\R}|x-T(x)|  \ \Poiss_4(dx).	
 \end{align*}
{\color{black} Moreover, since $T_{\#}\Poiss_4=\Poiss_5$, we have that
\begin{align*}
|m_1[\Poiss_4]-m_1[\Poiss_5]|
=
\left|\int_{\R}(x-T(x))\,\Poiss_4(dx)\right|
\leq
\int_{\R}|x-T(x)|\,\Poiss_4(dx),
\end{align*}
{\color{black} yielding} the inequality

\begin{align*}
  \freeWass{1} ( \mu_4, \mu_5 ) 
  \leq
  2\int_{\R}|x-T(x)|\,\Poiss_4(dx).
\end{align*}}
Thus, since $\Poiss_4(dx)=\Indi{J}(x) \, \Poiss(dx)$, with $J$ the disjoint union of the $J_k$, the Cauchy-Schwartz inequality and the definition of  $T$  give
\begin{align*}
{\color{black}    \freeWass{1}(\mu_4,\mu_5)}
\leq 
    {\color{black} 2}\Poiss_4[\R]^{1/2}
    \left(\int_{\R}|x-T(x)|^{2} \ \Poiss_4(dx)\right)^{1/2} 
= 
    {\color{black} 2}\Poiss_4[\R]^{1/2}
    \left({\color{black}\sum_{k=1}^{m}}\int_{J_k}|x-x_k|^{2} \ \Poiss_4(dx)\right)^{1/2}.	
\end{align*}
Notice that $x \in J_k$ implies $1 \leq \abs{x} / \radio_{\Delta} $ and $1 \leq \abs{x-x_{\ell}} / \radio_{\Delta} $ for any ${\ell} \neq k$. 
Hence, we get that 
\begin{equation}\label{eqn:step_v_integral_interval_Jk}
\int_{J_k}|x-x_k|^{2} \ \Poiss_4(dx) 
\leq 
\radio^{-2m}_{\Delta} \int_{J_k} x^2 (x-x_1)^2 \cdots (x-x_m)^2 \ \Poiss_4(dx) .
\end{equation}
Combining the last two inequalities, we get    
\begin{align*}
{\color{black}\freeWass{1}  ( \mu_4 , \mu_5 )}
\leq 
    \frac{{\color{black} 2}\Poiss_4[\R]^{1/2}}{\radio^m_{\Delta} }
    \left( \int_{J} x^2 (x-x_1)^2 \cdots (x-x_m)^2 \ \Poiss (dx) \right)^{1/2}.	
\end{align*}
As in \eqref{eqn:wasserstein_triangle_3} and \eqref{eqn:wasserstein_triangle_4}, it follows from Lemma \ref{lemma:polynomial_cumulant_bound} that 
\begin{align*} \label{eqn:wasserstein_triangle_5}
\freeWass{1} ( \mu_4 , \mu_5 )  & \leq {\color{black} 2}\Poiss_4[\R]^{1/2} \left(  \frac{ 2 \, U^{}_\Delta}{  \radio_{\Delta} } \right)^{m}  {\color{black}\mathcal{E}_{\nu}[\mu]^{1/2}} 
%& \leq 2 m_2[\eta]^{1/2} \left(  \frac{ 2 \, U^{}_\Delta}{  \radio_{\Delta} } \right)^{m} \mathcal{E}_{\Poiss}[\mu]^{1/2} 
\end{align*}
Now, since $\Poiss_4(dx)=\Indi{J}(x) \, \Poiss(dx)$, with $J$ the disjoint union of the $J_k$, and $1 \leq \abs{x} / \radio_{\Delta} $ for any $x \in J_k$ and  $k=1,2,\ldots,m$, we obtain 
\[
\eta_4 [\R] \leq  r^2_{\Delta} \int_{J} x^2 \,  \eta(dx) \leq r^2_{\Delta} \, m_2 [\eta]. 
\]
Therefore,  the last two inequalities and the fact that $r_\Delta \leq 1$ yield 
\begin{align} \label{eqn:wasserstein_triangle_5}
\freeWass{1} ( \mu_4 , \mu_5 )  
 \leq 2 m_2[\eta]^{1/2} \left(  \frac{ 2 \, U^{}_\Delta}{  \radio_{\Delta} } \right)^{m} \mathcal{E}_{\nu}[\mu]^{1/2} 
\end{align}
%

%\newpage 

%-----------------------------------------------------%
%-------------------- Step VI ------------------------%
%-----------------------------------------------------%

\noindent\textbf{Step VI.} %\\
Let $ c= \Delta[\R]  /  \Poiss_5[\R] > 0 $. 
Here we will compare $\mu_5 =  g_{\zeta^{2}}\circledast\pi_{ \Poiss_5 } \circledast \delta_{b} $ and $\mu_6 = g_{\zeta^{2}} \circledast \pi_{c \Poiss_5 }  \circledast \delta_{b}  $. 
If $c=1$, then  $\mu_5$ and $\mu_6$ coincide, and we can proceed to the following step. 
For the case $c>1$, we write the measures $\mu_5$ and $\mu_6$ as   
\begin{align*}
\mu_5
  =g_{\zeta^{2}}\circledast\pi_{ \Poiss_5 }\circledast\delta_{0} \circledast \delta_{b} 
\text{\quad and \quad}
\mu_6
  = g_{\zeta^{2}} \circledast \pi_{ \Poiss_5 }\circledast \pi_{(c-1) \Poiss_5 }  \circledast \delta_{b}.
\end{align*}
Then, by Lemma \ref{lemma:convolution_wasserstein-inequality}, we obtain 
\begin{equation}\label{ineq:W1pistepV}
  \freeWass{1} ( \mu_5, \mu_6 ) 
  \leq  \freeWass{1}( \pi_{ (c-1) \Poiss_5 }  , \delta_0  ) 
  \leq  \classWass{1}  ( \pi_{ |c-1| \Poiss_5 }  , \delta_0 )  .
\end{equation}
The same inequality can be obtained in the case $c<1$ by writing 
\begin{align*}
\mu_5
  =g_{\zeta^{2}}\circledast\pi_{c\Poiss_5 }\circledast \pi_{(1-c)\Poiss_5 } \circledast \delta_{b}
\text{\quad and \quad}
\mu_6
  = g_{\zeta^{2}} \circledast \pi_{ c\Poiss_5 }  \circledast\delta_0\circledast \delta_{b} 
\end{align*}
instead. 
In any case, the Cauchy-Schwartz inequality yields    
\begin{equation}\label{cauchy_step_vi}
     \classWass{1}( \pi_{ |1-c| \Poiss_5 }  , \delta_0 ) 
\leq
    \int_{\R} |x|  \ \pi_{ |1-c| \Poiss_5 } (dx)
\leq 
    \left(\int_{\R} x^{2} \  \pi_{ |1-c| \Poiss_5 }  (dx)\right)^{1/2} .
\end{equation}
Now, since $m_2[ \pi_{ |1-c| \Poiss_5 }] =  \kappa_2[  \pi_{ |1-c| \Poiss_5 } ]$, due to $\kappa_1[  \pi_{ |1-c| \Poiss_5 } ] = 0$, and $\kappa_2[  \pi_{ |1-c| \Poiss_5 } ] = m_2 [ |1-c| \Poiss_5] $, it follows from \eqref{ineq:W1pistepV} and \eqref{cauchy_step_vi} that 
\begin{align*}
 \freeWass{1} ( \mu_5, \mu_6 )   
\leq 
  \left(\int_{\R} x^{2} \  \left( |1-c| \Poiss_5 \right)  (dx)\right)^{1/2} 
= 
  \abs{1-c}^{1/2}
  m_2[\Poiss_5 ]^{1/2}. 
\end{align*}
But, by writing $\abs{1-c} = \frac{1}{ \Poiss_5[\R] } \abs{ \Poiss_5[\R] - \Delta[\R] }$, the previous inequality becomes 
\begin{align}\label{ineq:keyinstep6}
 \freeWass{1} ( \mu_5, \mu_6 )   
\leq 
  {\color{black} \abs{\Poiss_5[\R] - \Delta[\R] }^{1/2}} \cdot
  \frac{m_2[\Poiss_5 ]^{1/2}}{\Poiss_5[\R]^{1/2}}. 
\end{align}
% upper bound for each $|\Poiss_5[J_k]-\Delta[J_k]|$.
%
Therefore, to obtain an upper bound for $ \freeWass{1} ( \mu_5, \mu_6 )$, it suffices to find upper bounds for each $|\Poiss_5[J_k]-\Delta[J_k]|$ since we have  
\begin{equation}\label{eqn:upper_bound_R_vs_Jk}
\frac{m_2[\Poiss_5]}{\Poiss_5[\R]}
= 
\sum_{k=1}^m x_k^2 \frac{\Poiss_5[J_k]}{\Poiss_5[\R]} 
\leq m U^2_{\Delta}
\text{\quad and \quad}
|\Poiss_5[\R] - \Delta[\R]|
	\leq \sum_{k=1}^{m}|\Poiss_5[J_k]-\Delta[J_k]|.
\end{equation}
To this end, having fixed $k\in [m]$, we take  
\begin{equation}
s_1:= 1 + \frac{\Indi{x_k > 0}}{x_k} + \sum_{j=1}^{k-1} \frac{1}{x_k-x_j},
\text{\quad}
s_2:= -1 + \frac{\Indi{x_k < 0}}{x_k} + \sum_{j=k+1}^{m} \frac{1}{x_k-x_j},
\text{\quad and \quad} 
s : = \frac{1}{x_k} + \sum_{\substack{j=1 \\ j\neq k}}^{m} \frac{1}{x_k-x_j}.
\end{equation}
The $x_j$'s are all non-zero and can be assumed to be ordered, i.e, $x_1 < x_2 < \cdots < x_m$. 
In this case, we get $s_1 - 1 \geq 0$ and $s_2+1 \leq 0$; 
the $\pm 1$ terms are simply to ensure that both $s_1$ and $s_2$ are non-zero and their sum equals $s$.  
The cases $s=0$ and $s \neq 0$ are similar, the main difference being that the former requires no extra points $x_{m+1}$ and $x_{m+2}$.  
So, we will focus on the case $s \neq 0$. 

Assume $s \neq 0$ and set $x_{m+1} = x_k + s_1^{-1}$ and $x_{m+2} = x_k + s^{-1}_2$. 
Notice that 
\[
\frac{k+\Indi{x_k > 0}}{D_\Delta}\leq \abs{s_1} \leq  \frac{k + \Indi{x_k > 0}}{d_\Delta}
\text{\quad and \quad}
\frac{m-k+1+\Indi{x_k < 0}}{D_\Delta}\leq \abs{s_2} \leq  \frac{m - k +1 + \Indi{x_k < 0}}{d_\Delta} ,  
\]
and hence 
\begin{equation}\label{eqn:bound_for_extra_points}
d_\Delta \leq \abs{s^{-1}_1}, \abs{s^{-1}_2} \leq D_\Delta 
\text{\quad and \quad}
\abs{x_{m+1}}, \abs{x_{m+2}} \leq U_\Delta+D_\Delta .
\end{equation}
Consider now $f_k(x)$ given for each $x \in \R $ by 
\begin{align*}
    f_k(x) : =  \beta^{-1}_k  \cdot x \cdot \prod_{\substack{j=1 \\ j\neq k}}^{m+2} (x-x_j)
\quad \text{where} \quad
\beta_k := x_k \cdot \prod_{\substack{j=1 \\ j\neq k}}^{m+2} (x_k-x_j).
\end{align*}
Note that  $f_{k}(x_j)^2=\delta_{x_k,x_j}$. 
Thus, since both $\Delta$ and $\eta_5$ are supported on $\{x_1,\dots, x_{m}\}$, we obtain 
\begin{align*}
\int_{\R}
f_k(x)^2 \,
\Delta(dx)
= \Delta[\{x_k\}] =\Delta[J_k]
\qquad \text{and} \qquad
\int_{\R}f_k(x)^2 \,
\eta_5(dx)
=\eta_5[\{x_k\}]
=\eta_5[J_k].
\end{align*}
Consequently, from the fact that $\eta[J_k]=\eta_5[J_k]$, we get 
\begin{align*}%\label{eqn:step6_bounds_mass}
|\Delta[J_k]-\Poiss_5[J_k]| 
& =  \abs{\Delta[J_k]   - \int_{\R}f_k(x)^2 \, \eta(dx)  + \int_{\R}f_k(x)^2 \, \eta(dx) -\Poiss[J_k] } \\ 
& \leq 
    \left|
            \int_{\R}
                f_k(x)^2
            (\Delta-\Poiss)(dx)\right|
+ 
           \left| \int_{\R}
                f_k(x)^2
                -\Indi{J_k}(x)
         \ \Poiss(dx) \right| .
\end{align*}
Splitting into two complementary parts the rightmost term in the last expression, we obtain 
\begin{equation}\label{eqn:step6_bounds_mass_middle}
|\Delta[J_k]-\Poiss_5[J_k]| 
\leq 
    \left|
            \int_{\R}
                f_k(x)^2
            (\Delta-\Poiss)(dx)\right|
+  
    \int_{\R \setminus J_k}
f_k(x)^2  \ \Poiss(dx)  
+
    \int_{J_k}\left| 
        f_k(x)^2 -1 \right|  \ \Poiss(dx) .
\end{equation}
We will find upper bounds for each of the three terms in the right-hand side of \eqref{eqn:step6_bounds_mass_middle}.  
For the first term, let us note that   
\begin{align*}
\beta^{2}_k \cdot f_k(x)^2 =  x^2 \prod_{\substack{j=1 \\ j\neq k}}^{m+2}  (x-x_j)^2 
= (x_1 \cdots x_{k-1} x_{k+1} \cdots x_{m+2})^2  x^2  
+
\sum_{n=0}^{2m+1} \tilde{\alpha}_{2m+4-n}  x^{2m+4-n} 
\end{align*}
where 
%
%\begin{align*} \textstyle
$
\tilde{\alpha}_{2m+4-n} =\sum_{ \tilde{\mathbb{i}}_k } \prod_{\ell \neq k }(-x_{\ell})^{i_{2\ell-1}+i_{2\ell}}$, 
%\end{align*}
%
and this latter sum runs over all the multi-indices $\tilde{\mathbb{i}}_k=(i_1,\ldots, i_{2k-2},  i_{2k+1}, \ldots, i_{2m+4}) \in \{0,1\}^{2m+2}$  satisfying $i_1+ \cdots + i_{2k-2} +i_{2k+1} + \cdots+i_{2m+4}=n$. 
Thus, %by writing  
%
%\[ f_k(x)^2  = \beta^{-2}_k (x_1 \cdots x_{k-1}  x_{k+1} \cdots x_{m+2})^2 x^2 + \beta^{-2}_k  \sum_{n=3}^{2m+4} \tilde{\alpha}_n x^n, \]
%
we obtain 
\[
\abs{ \int_{\R} f_k(x)^2  \, (\Poiss-\Delta)(dx)  }
\leq 
\beta^{-2}_k (x_1 \cdots x_{k-1}  x_{k+1} \cdots x_{ m+2})^2 \left| m_2[\Poiss] - m_2[\Delta] \right|
+
\beta^{-2}_k \sum_{n=3}^{2m+4} \abs{\tilde{\alpha}_n}\abs{ m_n[\Delta] - m_n[\Poiss] } .
\]
We know from \eqref{eq:cumulantsidd} that $m_2[\Poiss] =\kappa_2(\mu)-\sigma^2$,  $m_2[\Delta] =\kappa_2(\nu)-\zeta^2$, and $\kappa_n[\mu] = m_n[\Poiss]$  and $\kappa_n[\nu] = m_n[\Delta]$ for $n \geq 3$.
%
%Moreover, from Step I, we know that {\color{red} $\abs{\zeta^2-\sigma^2}\leq C_\Delta \mathcal{E}_{\nu}[\mu]$.}
%
Consequently, we get 
\begin{equation*}
\abs{ \int_{\R} f_k(x)^2  \, (\Poiss-\Delta)(dx)  }
\leq 
\tilde{C}_k \left( 
\abs{ \zeta^2 -\sigma^{2} }
+\
 \sum_{n=2}^{2m+4} \abs{ \kappa_n[\mu] - \kappa_n[\nu] }  \right)
\end{equation*}
% If $0\leq\zeta\leq \sigma$ 
where $\tilde{C}_k = \beta^{-2}_k \cdot \max\{(x_1 \cdots x_{k-1}  x_{k+1} \cdots x_{ m+2})^2 , \abs{\tilde{\alpha}_3}, \abs{\tilde{\alpha}_4}, \ldots ,  \abs{\tilde{\alpha}_{2m+4}} \}.$ 
It follows from \eqref{eqn:wasserstein_triangle_1} in Step I that  
%\[ \abs{\sigma^2-\zeta^2}  \leq \left(  \tfrac{2 \, U^{}_\Delta}{ r_\Delta } \right)^{2m} \mathcal{E}_{\nu}[\mu]\]
\begin{equation}\label{eqn:step_vi_bound_1_pre}
\abs{ \int_{\R} f_k(x)^2  \, (\Poiss-\Delta)(dx)  }
\leq 
\tilde{C}_k  \left[ \left( \tfrac{2 \, U^{}_\Delta}{ r_\Delta } \right)^{2m}   + 1 \right] \mathcal{E}_{\nu}[\mu].
\end{equation}
For the middle term in the right-hand side of \eqref{eqn:step6_bounds_mass_middle}, let us note that $1  \leq \abs{x-x_k} / \radio_\Delta $ for $x \in \R \setminus J_k $, so we get 
\[
f_k(x)^2 \leq r^{-2}_\Delta \, f_k(x)^2 (x-x_k)^2  
%= (\beta_k \radio_\Delta )^{-2}  x^{2} (x-x_1)^2 \cdots (x-x_k)^2 (x-x_{k+1})^2 \cdots (x-x_{m+2})^2  =  
\text{\quad for \quad} x \in \R \setminus J_k, 
\]
and hence  
\begin{align*}%\label{eqn:stepvi_secondterm_part_1_v2}
    \int_{\R \setminus J_k}
f_k(x)^2  \ \Poiss(dx)  
& \leq 
   (\beta_k \radio_\Delta )^{-2}  \int_{ \R  }  x^{2} 
            \prod_{j=1}^{m+2}(x-x_j)^2  \  \Poiss (dx) .
%\nonumber \\ & \leq (2)^{2m+4} \frac{(U_\Delta)^{2m} (U_\Delta + D_\Delta)^4}{r_\Delta^{2m+6}} \mathcal{E}_{\nu}[\mu]
\end{align*}
Now, the measure $\Delta$ is supported on the set $\{x_1, x_2, \ldots, x_m \}$, and $\prod_{j=1}^{m+2}(x_i-x_j)^2 =0$ for $1\leq i\leq m$, so we obtain 
\begin{align*}
%(\beta_k \radio_\Delta )^{-2}  
\int_{ \R  }  x^{2} 
            \prod_{j=1}^{m+2}(x-x_j)^2  \  \Poiss (dx) 
 = 
%(\beta_k \radio_\Delta )^{-2}  
\int_{ \R  }  x^{2} 
            \prod_{j=1}^{m+2}(x-x_j)^2  \  (\Poiss-\Delta) (dx) 
 \leq  (2)^{2m+4} (U_\Delta)^{2m} (U_\Delta + D_\Delta)^4  \, \mathcal{E}_{\nu}[\mu]
\end{align*}
where the last inequality follows from similar arguments as in \eqref{eqn:polynomial_cumulant_inequality} and \eqref{eqn:polynomial_cumulant_inequalityprev} from Lemma \ref{lemma:polynomial_cumulant_bound} applied to the full set of points $x_1,\ldots,x_k, x_{k+1},\ldots,x_{m+2}$. 
Thus, since $r^{2m+4}_\Delta\leq \beta^2_k$, the last two expressions yield  
\begin{align*}
    \int_{\R \setminus J_k}
f_k(x)^2  \ \Poiss(dx)  
 \leq \frac{(2)^{2m+4} (U_\Delta)^{2m} (U_\Delta + D_\Delta)^4}{r^{2m+6}_\Delta}  \, \mathcal{E}_{\nu}[\mu]
\end{align*}
Since $D_\Delta \leq 2U_\Delta$, the last inequality gives 
\begin{align}\label{eqn:stepvi_secondterm_part_1_v2}
    \int_{\R \setminus J_k}
f_k(x)^2  \ \Poiss(dx)  
 \leq \frac{(3)^{4} (2 U_\Delta)^{2m+4} }{r^{2m+6}_\Delta}  \, \mathcal{E}_{\nu}[\mu]
\end{align}
For the rightmost term in the right-hand side of \eqref{eqn:step6_bounds_mass_middle}, let us note that 
\[
 f_k(x_k) f'_k(x_k) = 0.   
\]
Indeed, for $x\neq x_j$ with $j\neq k$, we have 
\begin{align*}
\beta^{2}_k  \, f_k(x)  f'_k(x) 
& =  x \cdot \prod_{j \neq k} (x-x_j)^2 \left( 1 + x \sum_{i \neq k} \frac{1}{x-x_i}  \right), 
\end{align*}
and evaluating $x=x_k$ in the last parenthesis yield  
\begin{equation*}
   1 + x_k  \sum_{\substack{i=1 \\ i \neq k}}^{m} \frac{1}{x_k-x_i} + x_k \sum_{i= m+1}^{m+2} \frac{1}{x_k-x_i}
=    
1 + x_k  \sum_{\substack{i=1 \\ i \neq k}}^{m} \frac{1}{x_k-x_i} - x_k (s_1 + s_2)
= 0.
\end{equation*}
Thus, setting $g_k(x)=f_k(x)^2 - 1$ for $x \in \R$ and  $\tilde{D_k} =  \frac{1}{2}\sup_{x \in J_k} \abs{ g''_k(x)}$, Taylor's theorem implies 
\[
\abs{f_k(x)^2 - 1} \leq \tilde{D}_k \abs{x-x_k}^2 
\text{\quad for \quad } x \in J_k. 
\]
Hence, we obtain  
\begin{equation*}
    \int_{J_k}\left| 
        f_k(x)^2 - 1 \right|  \ \Poiss(dx)
\leq 
\tilde{D}_k
\int_{J_k}|x-x_k|^{2} \ \Poiss(dx) 
\leq 
\tilde{D}_k \, {\color{black} \radio^{-2m}_\Delta}
\int_{J_k} x^2 (x-x_1)^2 \cdots (x-x_m)^2 \ \Poiss(dx) 
\end{equation*}
where the last inequality follows from the fact that $1 \leq \abs{x} / \radio_{\Delta} $ and $1 \leq \abs{x-x_{\ell}} / \radio_{\Delta} $ for any $x \in J_k$ and ${\ell} \neq k$. 
It follows from the last expression and Lemma \ref{lemma:polynomial_cumulant_bound} that 
\begin{align}\label{eqn:stepvi_secondterm_part_2}
&    \int_{J_k}\left| 
        f_k(x)^2 -1 \right|  \ \Poiss(dx)
 \leq \tilde{D}_k
 \left( \frac{ 2 \, U_\Delta}{  \radio_\Delta } \right)^{2m}  
 \mathcal{E}_{\nu}[\mu]
% \leq \tilde{D}_k \left( \frac{ 2 \, {\color{black} D}_\Delta}{  \radio_\Delta } \right)^{2m+6}  {\color{black} \mathcal{E}_{\nu}[\mu]} 
\end{align}
From \eqref{eqn:step6_bounds_mass_middle}, and putting together \eqref{eqn:step_vi_bound_1_pre}, \eqref{eqn:stepvi_secondterm_part_1_v2} and \eqref{eqn:stepvi_secondterm_part_2}, we get 
\begin{equation}\label{eqn:bound_for_Delta_eta_in_J_k}
|\Delta[J_k]-\Poiss_5[J_k]| \leq 
\left\{ {\textstyle  \tilde{C}_k
\left[ \left( \frac{2 \, U^{}_\Delta}{ r_\Delta } \right)^{2m}   + 1 \right]
+
\frac{(3)^{4} (2 U_\Delta)^{2m+4} }{r^{2m+6}_\Delta} 
+ 
\tilde{D}_k
 \left( \frac{ 2 \, U_\Delta}{  \radio_\Delta } \right)^{2m}   } \right\}
 \mathcal{E}_{\nu}[\mu] 
\end{equation}

Let us now find explicit upper bounds for $\tilde{C}_k$ and $\tilde{D}_k$ that do not depend on $k$. 
First, from \eqref{def:U_Delta_L_Delta} and  \eqref{eqn:bound_for_extra_points}, we have 
\[
%d^{m+2}_\Delta \leq \abs{\beta_k} \text{\quad and \quad} 
|\tilde{\alpha}_{{\color{black}2m+4-n}}| \leq  \sum_{ \tilde{\mathbb{i}}_k } \left( \prod_{\ell \neq k,m+1,m+2} U^2_\Delta \right) (D_\Delta+U_\Delta)^4   
= (2)^{2m+2} (U_\Delta)^{2m-2}  (D_\Delta+U_\Delta)^4   
%\leq (2 D_\Delta )^{2m+6};
%(2)^{2m+2} (D_\Delta)^{2m-2}  (2 D_\Delta)^4  ; 
\]
and
\[
(x_1 \cdots x_{k-1}  x_{k+1} \cdots x_m x_{m+1} x_{m+2})^2 \leq (U^{}_\Delta)^{2m-2}(D_\Delta+U_\Delta)^4 %\leq (2 D_\Delta )^{2m+6} .
\]
additionally,  from \eqref{def:D_Delta_d_Delta} and \eqref{eqn:bound_for_extra_points}, we also get 
\[
r^{m+2}_{\Delta}\leq d^{m+2}_\Delta \leq \abs{\beta_k} = \abs{x_k \cdot {\textstyle \prod_{\substack{j=1 \\ j\neq k}}^{m+2} (x_k-x_j) }}.
\]
Therefore, since $\radio_\Delta \leq d_\Delta \leq 1$, we obtain   
\begin{equation*}
\tilde{C}_k = \beta^{-2}_k \cdot \max\left\{ { \textstyle \prod^{m+2}_{\substack{ j=1 \\ j \neq k }} x^2_j } , \abs{\tilde{\alpha}_3}, \abs{\tilde{\alpha}_4}, \ldots ,  \abs{\tilde{\alpha}_{2m+4}} \right\}
\leq 
\frac{(2)^{2m+2} (U_\Delta)^{2m-2} (U_\Delta + D_\Delta)^4}{r_\Delta^{2m+4}}
.
\end{equation*}
Since $D_\Delta \leq 2U_\Delta$, the last inequality gives 
\begin{equation}\label{eqn:bound_for_constant_Ck}
\tilde{C}_k 
 \leq \frac{(3)^{4} (2 U_\Delta)^{2m+2} }{r^{2m+4}_\Delta} .
\end{equation}
Second, setting $x_0 = 0$, the function $f_k(x)$, $f'_k(x)$, and $f''_k(x)$ can be written as 
\[
f_k(x) = \beta^{-1}_k \prod_{\substack{j=0 \\  j \neq k }}^{m+2} (x-x_j),
\quad 
f'_k(x) = \beta^{-1}_k \sum_{\substack{j=0 \\  j \neq k }}^{m+2}  \prod_{\substack{i=0 \\  i \neq k,j }}^{m+2} (x-x_i),
\quad\text{and}\quad
f''_k(x) = \beta^{-1}_k \sum_{\substack{j=0 \\  j \neq k }}^{m+2}  \sum_{\substack{i=0 \\  i \neq k,j }}^{m+2} \prod_{\substack{\ell=0 \\  i \neq k,j,i }}^{m+2} (x-x_\ell).
\]
But, for any $x \in J_k$, and $j=0,1,\ldots,k-1,k+1,\ldots,m+2$, we know that  
\[
\abs{x-x_j} \leq \abs{x-x_k} + \abs{x_k - x_j} \leq r_\Delta + D_\Delta \leq r_\Delta + 2 U_\Delta.
\]
Thus, for $x \in J_k$ and $\ell = 0,1,2$, we obtain   
\[
| f^{(\ell)}_k (x) | \leq |\beta^{-1}_k|  (m+2)^{\ell} (r_\Delta + 2 U_\Delta)^{m+2-\ell}. 
\]
Since $g''_k = 2 (f''_kf_k + f'_kf'_k)$ , the above upper bounds for $ f^{(\ell)}_k (x)$ with $x \in J_k$ imply 
\begin{equation}\label{eqn:bound_for_constant_Dk}
\tilde{D}_k =  \frac{1}{2}\sup_{x \in J_k} \abs{ g''_k(x)}
\leq  \beta^{-2}_k  2(m+2)^2 (2U_\Delta+r_\Delta)^{2m+2}
\leq \frac{2(m+2)^2 (2 U_\Delta+r_\Delta)^{2m+2}}{r^{2m+4}_\Delta}
\end{equation}
since $r^{2m+4}_\Delta\leq \beta^2_k$.

Therefore, from \eqref{eqn:bound_for_Delta_eta_in_J_k} and the upper bounds \eqref{eqn:bound_for_constant_Ck} and \eqref{eqn:bound_for_constant_Dk}, we obtain  
\[
\abs{\Delta[J_k]-\Poiss_5[J_k]  }  
\leq N^2_\Delta \,
 \left( \frac{ 2 \, U_\Delta}{  \radio_\Delta } \right)^{2m}  
 \,
 \mathcal{E}_{\nu}[\mu] 
\]
where 
\begin{align}\label{eq:Ndeltadef}
N^2_\Delta 
= 
\frac{(3)^{4} ( 2 U_\Delta)^{2} }{r_\Delta^{4}}
\left[ \left( \frac{2 \, U^{}_\Delta}{ r_\Delta } \right)^{2m}   + 1 \right]
+
\frac{(3)^{4} (2 U_\Delta)^{4} }{r^{6}_\Delta} 
+ 
\frac{2(m+2)^2 (2 U_\Delta+r_\Delta)^{2m+2}}{r^{2m+4}_\Delta}  .
\end{align}
Thus, it follows from \eqref{ineq:keyinstep6} and \eqref{eqn:upper_bound_R_vs_Jk} that 
\begin{equation}\label{eqn:wasserstein_triangle_6}
 \freeWass{1} ( \mu_5, \mu_6 )  
 \leq 
 \left( 
\frac{m_2[\Poiss_5 ]}{\Poiss_5[\R]}
\cdot
\abs{\Poiss_5[\R] - \Delta[\R] }  \right)^{1/2}
\leq 
m \, U_\Delta  \, N_\Delta \,
 \left( \frac{ 2 \, U_\Delta}{  \radio_\Delta } \right)^{m}  
 \,
 \mathcal{E}_{\nu}[\mu]^{1/2} 
\end{equation}

%-----------------------------------------------------%
%-------------------- Step VII -----------------------%
%-----------------------------------------------------%

\noindent\textbf{Step VII.} %\\ 
Finally, we compare $\mu_6 = g_{\zeta^{2}} \circledast \pi_{\Poiss_6}  \circledast \delta_{b}  $ and $\mu_7 = g_{\zeta^2}  \circledast \pi_{\Delta} \circledast \delta_b = \nu$ where $\Poiss_6=c\Poiss_5$ with $ c= \Delta[\R]  /  \Poiss_5[\R] $. 
Since $\Poiss_6$ and $\Delta$ have the same total mass, i.e., $\Poiss_6[\R]= \Delta[\R]$, Lemmas \ref{lemma:convolution_wasserstein-inequality} and \ref{lemma:convolution_inequality} imply   
\begin{equation*}
  \freeWass{1} ( \mu_6, \mu_7 ) 
  \leq  \freeWass{1}( \pi_{ \Poiss_6 }  , \pi_{\Delta}  ) 
  \leq  \classWass{1}  ( \Poiss_6  , \Delta) 
  {\color{black}+ 
  |m_1[\Poiss_6]-m_1[\Delta]|}.
\end{equation*}
Moreover, from the dual formulation of $\classWass{1}(\Poiss_6,\Delta)$, we obtain 
\begin{align*}
\classWass{1}(\Poiss_6,\Delta)
%  &=\sup_{f\in Lip_1(\R)}\left|\int_{\R}f(x)(\Poiss_6-\Delta)(dx)\right|\\
  &
  =\sup_{f\in Lip_1(\R)}\left|\int_{\R}(f(x)-f(0))(\Poiss_6-\Delta)(dx)\right| 
\end{align*}
where $Lip_1(\R)$ denotes the set of real $1$-Lipschitz functions defined on $\R$.
Now, both $\Delta$ and $\eta_5$ are supported on $\{x_1,\dots, x_{m}\}$ with $\Delta[\{x_k\}] = \Delta[J_k]$ and $\Poiss_6[\{x_k\}] = \Poiss_6[J_k]$, so the Lipschitz condition implies 
\begin{equation*}
\classWass{1}(\Poiss_6,\Delta)
\leq 
    \left( \max_k \abs{x_k} \right) \sum_{k=1}^{m}|\Delta[J_k]-\Poiss_6[J_k]|
\leq 
    U_\Delta \sum_{k=1}^{m}|\Delta[J_k]-\Poiss_6[J_k]|.
\end{equation*}
Moreover, we have 
\begin{align*}
|m_1[\Poiss_6]-m_1[\Delta]|
&=
\left|
\sum_{k=1}^{m}x_k\big(\Poiss_6[J_k]-\Delta[J_k]\big)
\right| \leq
U_\Delta \sum_{k=1}^{m}|\Delta[J_k]-\Poiss_6[J_k]|. 
\end{align*}
Consequently, we obtain 
\begin{equation}\label{eqn:step_vii_first_ineq}
\freeWass{1}(\mu_6,\mu_7)
\leq
2U_\Delta
\sum_{k=1}^{m}|\Delta[J_k]-\Poiss_6[J_k]|.
\end{equation}
Now, let us find an upper bound for each $|\Delta[J_k]-\Poiss_6[J_k]|$. 
For this, consider the inequality 
\begin{equation*}\label{eqn:bound_for_Delta_eta_in_J_k_step_vi} 
|\Delta[J_k]-\Poiss_6[J_k]| 
\leq
\abs{\Delta[J_k]-\Poiss_5[J_k]  } + 
\abs{\Poiss_5[J_k]-\Poiss_6[J_k] } 
\end{equation*}
and recall that 
\[
\abs{\Delta[J_k]-\Poiss_5[J_k]  }  
\leq N^2_\Delta \,
 \left( \frac{ 2 \, U_\Delta}{  \radio_\Delta } \right)^{2m}  
 \mathcal{E}_{\nu}[\mu] 
 \text{\quad and \quad} 
 |\Delta[\R]-\Poiss_5[\R]|  \leq  
m \, N^2_\Delta \,
 \left( \frac{ 2 \, U_\Delta}{  \radio_\Delta } \right)^{2m}  
 \mathcal{E}_{\nu}[\mu] 
\]
from the previous step. 
Moreover, since $\Poiss_6=c\Poiss_5$ with $ c= \Delta[\R]  /  \Poiss_5[\R] $ and $\eta_5 [J_k] \leq \eta_5[\R]$,  we have  
\begin{align*}
\abs{\Poiss_5[J_k]-\Poiss_6[J_k] }
=
\frac{\Poiss_5[J_k]}{\Poiss_5[\R]} \cdot 
\abs{ \Poiss_5[\R] - \Delta[\R]   }
\leq 
\abs{ \Poiss_5[\R] - \Delta[\R]   }
\end{align*} 
Thus, we get 
\[
\abs{\Delta[J_k]-\Poiss_5[J_k]  }   
\leq (m+1) \, N^2_\Delta \,
 \left( \frac{ 2 \, U_\Delta}{  \radio_\Delta } \right)^{2m}  
 \,
 \mathcal{E}_{\nu}[\mu], 
\]
and therefore 
\begin{align}\label{eqn:wasserstein_triangle_7}
\freeWass{1}(\mu_6,\mu_7)
& \leq 
2U_\Delta  \, m \, (m+1) \, N^2_\Delta \, 
 \left( \frac{ 2 \, U_\Delta}{  \radio_\Delta } \right)^{2m}  
  \mathcal{E}_{\nu}[\mu] \nonumber \\ 
& \leq 
2U_\Delta  \, m \, (m+1) \, N^2_\Delta \, 
 \left( \frac{ 2 \, U_\Delta}{  \radio_\Delta } \right)^{2m}  
  \mathcal{E}_{\nu}[\mu]^{1/2}
\end{align}
where the last inequality follows from the assumption that $\mathcal{E}_{\nu}[\mu] \leq 1$.  

Finally, putting together  \eqref{eqn:wasserstein_triangle_1}, \eqref{eqn:wasserstein_triangle_2}, 
\eqref{eqn:wasserstein_triangle_3},
\eqref{eqn:wasserstein_triangle_4}, 
\eqref{eqn:wasserstein_triangle_5},
\eqref{eqn:wasserstein_triangle_6}, \eqref{eqn:wasserstein_triangle_7}, 
we conclude 
\begin{equation}
\freeWass{1} (\mu,\nu) \leq \left\{ 1 + (M_\Delta +
2 m_2[\mu]^{1/2} ) \left( \frac{ 2 \, U_\Delta}{  \radio_\Delta } \right)^{m}  \right\} 
  \mathcal{E}_{\nu}[\mu]^{1/2}
\end{equation}
where 
\begin{equation}
M_\Delta
= 
3 
+
m \, U_\Delta  \, N_\Delta 
+
2U_\Delta  \, m \, (m+1) \, N^2_\Delta \, 
 \left( \frac{ 2 \, U_\Delta}{  \radio_\Delta } \right)^{m}  
\end{equation}

%removecomment\newpage 

%\nocite{*}
\bibliographystyle{abbrv}
\bibliography{bibibi}

\end{document}